\newtheorem{thm}{Theorem}[section]
\newtheorem{lem}[thm]{Lemma}
\newtheorem{lem-dfn}[thm]{Lemma-Definition}
\newtheorem{prop}[thm]{Proposition}
\newtheorem{cor}[thm]{Corollary}
\theoremstyle{definition}
\newtheorem{defn}[thm]{Definition}
\newtheorem{exam}[thm]{Example}
\newtheorem{ex}[thm]{Example}
\theoremstyle{remark}
\newtheorem{rem}[thm]{Remark}
\numberwithin{equation}{section}
\newcommand{\proref}[1]{Proposition~\ref{#1}}
\DeclareMathOperator{\Spec}{Spec}
\DeclareMathOperator{\spec}{Spec}
\DeclareMathOperator{\supp}{Supp}
\newcommand{\m}{\mathfrak m}
\newcommand{\frm}{\mathfrak{m}}
\newcommand{\Z}{\mathbb Z}
\newcommand{\N}{\mathbb N}
\newcommand{\bbZ}{\ensuremath{\mathbb Z}}
\newcommand{\cC}{\mathcal C}
\newcommand{\cO}{\mathcal O}
\newcommand{\cR}{\mathcal R}
\newcommand{\ol}[1]{\overline {#1}}
\newcommand{\defset}[2]{{\left\{#1\,\left| \,#2 \right. \right\}}}
\begin{document}
\title{Rees algebras and $p_g$-ideals in a two-dimensional normal local domain}
\author{Tomohiro Okuma}
\address[Tomohiro Okuma]{Department of Mathematical Sciences, 
Faculty of Science, Yamagata University,  Yamagata, 990-8560, Japan.}
\email{okuma@sci.kj.yamagata-u.ac.jp}
\author{Kei-ichi Watanabe}
\address[Kei-ichi Watanabe]{Department of Mathematics, College of Humanities and Sciences, 
Nihon University, Setagaya-ku, Tokyo, 156-8550, Japan}
\email{watanabe@math.chs.nihon-u.ac.jp}
\author{Ken-ichi Yoshida}
\address[Ken-ichi Yoshida]{Department of Mathematics, 
College of Humanities and Sciences, 
Nihon University, Setagaya-ku, Tokyo, 156-8550, Japan}
\email{yoshida@math.chs.nihon-u.ac.jp}
\thanks{This work was partially supported by JSPS Grant-in-Aid 
for Scientific Research (C) Grant Numbers, 25400050, 26400053, 26400064}
\subjclass[2000]{Primary 13B22; Secondary 13A30, 14B05}
\date{\today}
\keywords{$p_g$-ideal, Rees algebra, normal Hilbert coefficient, Cohen-Macaulay, rational singularity}
\begin{abstract}
The authors \cite{OWYgood} introduced the notion of 
$p_g$-ideals for two-dimensional excellent normal 
local domain over an algebraicaly closed field 
in terms of resolution of singularities. 
In this note, we give several ring-theoretic  
characterization of $p_g$-ideals. 
For instance, 
an $\m$-primary ideal $I \subset A$ is a $p_g$-ideal 
if and only if the Rees alegbra $\mathcal{R}(I)$ is a Cohen-Macaulay normal domain.  
\end{abstract}
\maketitle

\section{Introduction}

In \cite{Li}, Lipman proved that any integrally closed $\m$-primary ideal $I$ in a 
two-dimensional rational singularity $(A,\m)$ is stable and normal, 
that is, $I^2=QI$ holds for some (every) minimal reduction $Q$ and all power of $I$ are 
integrally closed.  
This implies that the Rees algebra $\cR(I)=\oplus_{n \ge 0} I^n$ is a 
Cohen-Macaulay normal domain.
Moreover, for any two integrally closed $\m$-primary ideals $I$, $J$ in a two-dimensional
rational singularity, one can choose general elements $a \in I$ and $b \in J$ so that 
$IJ=aJ+bI$. This fact implies that the bigraded Rees algebra $\cR(I,J)$ is a 
Cohen-Macaulay ring.   
An ideal theory in a two-dimensional rational singularity is established based upon these facts.  

\par 
In \cite{OWYgood}, the authors introduced the notion of $p_g$-ideals
for two-dimensional normal local domain $(A,\m)$ having a resolution of singularities 
over an algebraically closed field. 
Let $I \subset A$ be an $\m$-primary integrally closed ideal. 
Then there exist a resolution of singularities $f \colon X \to \Spec A$ and an 
anti-nef cycle $Z$ on $X$ so that 
$I=H^0(\mathcal{O}_X(-Z))$ and $I\mathcal{O}_X$ is invertible. 
Then we can show that 
$\ell_A(H^1(\mathcal{O}_X(-Z))\le p_g(A)$,
where $p_g(A)$ is the geometric genus of $A$.  If equality holds, then $Z$ is called a
$p_g$-cycle and $I=I_Z$ is called a $p_g$-ideal. 
We can consider that the notion  of $p_g$-ideals is a natural extension of 
integrally closed $\m$-primary ideal in a two-dimensional rational singularity. 
See \cite{OWYgood} for more details. 

\par 
The main purpose of this note is to give several ring-theoretic characterization of $p_g$-ideals. 
Namely, we prove the following theorem. 

\begin{thm}[See Theorems \ref{t:normalHP}, \ref{t:pgR}] \label{Main}
Let $(A,\m)$ be a two-dimensional 
excellent normal local domain 
over an algebraically closed field. 
Let $I \subset A$ be an $\m$-primary ideal,
and let $Q$ be a minimal reduction of $I$. 
Then the following conditions are equivalent$:$
\begin{enumerate}
 \item $I$ is a $p_g$-ideal. 
 \item $I^2=QI$ and $\overline{I^n} =I^n$ 
for every $n \ge 1$, 
where $\overline{J}$ denotes the integral closure of an ideal $J$. 
 \item The Rees algebra $\cR(I)$ is a Cohen-Macaulay  normal domain. 
 \item $\overline{e}_2(I)=0$. 
\end{enumerate} 
\end{thm}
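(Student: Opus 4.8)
The standing hypotheses make $A$ a two-dimensional Cohen--Macaulay local ring (normality together with $\dim A=2$ forces the Serre condition $S_2$), and excellence makes $A$ analytically unramified, so that the theory of normal Hilbert coefficients applies. The plan is to settle the purely ideal-theoretic equivalences $(2)\Leftrightarrow(3)\Leftrightarrow(4)$ first, and then to bridge to the geometric condition $(1)$ through a Riemann--Roch computation on a resolution.

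For the commutative-algebra core, the equivalence $(2)\Leftrightarrow(3)$ is standard: the Rees algebra $\cR(I)$ is normal precisely when $\overline{I^n}=I^n$ for all $n$, and, $A$ being two-dimensional Cohen--Macaulay, the Goto--Shimoda criterion makes $\cR(I)$ Cohen--Macaulay exactly when the reduction number $r_Q(I)\le 1$, i.e. $I^2=QI$ (Valabrega--Valla then gives Cohen--Macaulayness of the associated graded ring for free). For $(3)\Rightarrow(4)$ I would observe that once $I$ is normal the filtration $\{\overline{I^n}\}$ coincides with $\{I^n\}$, so the normal Rees algebra equals $\cR(I)$ and is Cohen--Macaulay; Itoh's theorem on the normal Hilbert coefficients of an $\m$-primary ideal then forces $\overline{e}_2(I)=0$. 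The reverse $(4)\Rightarrow(2)$ is the first delicate point: Itoh's characterization of the vanishing of $\overline{e}_2$ yields $\overline{I^{n+1}}=Q\overline{I^n}$ for every $n\ge 1$, and then, using that $I$ is integrally closed as the base case, an induction gives $I^{n+1}=Q\overline{I^n}=QI^n$ and hence $\overline{I^{n+1}}=I^{n+1}$ for all $n$, which is exactly $(2)$.

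The heart of the argument is the equivalence of $(1)$ with these conditions. Here I would pass to a resolution $f\colon X\to\Spec A$ on which $I\OO_X=\OO_X(-Z)$ is invertible with $Z$ anti-nef, so that $\overline{I^n}=H^0(\OO_X(-nZ))$ for every $n$. Applying Riemann--Roch to $\OO_X(-nZ)$ and comparing with $\OO_X$ via $0\to\OO_X(-nZ)\to\OO_X\to\OO_{nZ}\to 0$ yields the length formula
\begin{equation*}
\ell_A(A/\overline{I^n})=-\frac{Z^2}{2}\,n^2-\frac{Z\cdot K_X}{2}\,n+\bigl(p_g(A)-\ell_A(H^1(\OO_X(-nZ)))\bigr).
\end{equation*}
Reading off the constant term of the normal Hilbert polynomial gives $\overline{e}_2(I)=p_g(A)-\ell_A(H^1(\OO_X(-nZ)))$ for $n\gg 0$. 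Since $\ell_A(H^1(\OO_X(-nZ)))\le p_g(A)$ always (the bound quoted in the introduction applied to the anti-nef cycle $nZ$), the vanishing $\overline{e}_2(I)=0$ is equivalent to $H^1(\OO_X(-nZ))$ attaining its maximal length $p_g(A)$ for large $n$, and the results of \cite{OWYgood} detecting the $p_g$-property from a single power let me descend this to $\ell_A(H^1(\OO_X(-Z)))=p_g(A)$, i.e. to $(1)$.

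The main obstacle I expect is precisely this last descent: the length formula controls only the \emph{asymptotic} behaviour of $\ell_A(H^1(\OO_X(-nZ)))$, whereas being a $p_g$-ideal is a condition on $Z$ itself. Showing that $nZ$ being a $p_g$-cycle for large $n$ forces $Z$ to be one --- equivalently, that the $p_g$-property is stable under and detected by taking powers --- requires the monotonicity and product structure of the family $\{H^1(\OO_X(-nZ))\}$ from \cite{OWYgood}, together with the verification that a single resolution computes $\overline{I^n}$ correctly for all $n$ simultaneously. I would isolate this as the key lemma and treat the Riemann--Roch bookkeeping and Itoh's inequality as supporting ingredients.
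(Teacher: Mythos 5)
Your proposal is correct, and its geometric core coincides with the paper's: both apply Kato's Riemann--Roch formula (Theorem \ref{t:kato}) to $\cO_X(-nZ)$ to read off $\overline{e}_2(I)=p_g(A)-h^1(\cO_X(-nZ))$ for $n\gg 0$, both use the bound $h^1(\cO_X(-nZ))\le p_g(A)$ together with the descent of the $p_g$-property from $nZ$ to $Z$ (Proposition \ref{p:sg}(3), proved via $0\to\cO_X(-Z)\to\cO_X(-(n+1)Z)\to\cC\to 0$), and both get $(2)\Leftrightarrow(3)$ from Valabrega--Valla and Goto--Shimoda. Where you genuinely differ is in how the cycle of implications is closed. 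You propose a purely algebraic $(4)\Rightarrow(2)$ via Itoh's theorem (in dimension two, Huneke's) that $\overline{e}_2(I)=0$ forces $\overline{I^{n+1}}=Q\overline{I^{n}}$ for all $n\ge 1$, followed by induction from the base case $\overline{I}=I$; the paper never invokes Itoh or Huneke, proving instead $(4)\Rightarrow(1)$ geometrically (Corollary \ref{pg-Hilb}) and handling $(2)\Rightarrow(1)$ in Theorem \ref{t:pgR} by observing that $QI^{n}=I^{n+1}=\overline{I^{n+1}}$ gives $\varepsilon(Z,nZ)=p_g(A)-h^1(\cO_X(-Z))-h^1(\cO_X(-nZ))+h^1(\cO_X(-(n+1)Z))=0$, whence $h^1(\cO_X(-Z))=p_g(A)$ once $h^1(\cO_X(-nZ))$ stabilizes. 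Your route makes the ideal-theoretic equivalences independent of the resolution at the price of importing Itoh's nontrivial result; the paper's route stays entirely inside the Riemann--Roch/$h^1$ framework. One caveat you share with the paper: condition (4) depends only on the filtration $\{\overline{I^{n}}\}$, so its equivalence with (2) and (3) requires $I$ itself to be integrally closed --- you use this correctly as the base case of your induction, and the paper builds it in by assuming $I=I_Z$ in Theorem \ref{t:normalHP}.
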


\par 
Let us explain the organization of the paper. 
In Section 2, we recall the definition and several basic properties for $p_g$-ideals. For instance, $IJ=aJ+bI$ holds true 
for any two $p_g$-ideals $I$, $J$ and general elements 
$a \in I$, $b \in J$.  
In Section 3, we give a characterization of $p_g$-ideals in terms of normal Hilbert polynomials. 
Namely, the vanishing of the second normal Hilbert coefficient of $I$ yields that the ideal is a $p_g$-ideal.  
See Theorem \ref{t:normalHP} for more general statement. 
In Section 4, we give a characterization of $p_g$-ideals in terms of Rees algebras. 
Namely, an ideal $I$ is a $p_g$-ideal if and only if 
the Rees algebra $\cR(I)$ is a Cohen-Macaulay normal domain.  
Applying this result, one can find some examples of 
$p_g$-ideals.

\medskip
\section{Basic results}
Throughout this paper, let $(A,\m)$ be an excellent 
two-dimensional normal local domain and 
$f\:X \to \spec A$ a resolution of singularities 
with exceptional divisor $E:=f^{-1}(\m)$ unless otherwise specified. 
Let $E=\bigcup_{i=1}^rE_i$ be the decomposition into  irreducible components of $E$.

\par \vspace{2mm}
First, we recall the definition of $p_g$-ideals. 
An $\m$-primary ideal $I$ is said to be {\em represented on} $X$ if the ideal sheaf $I\cO_X$ is invertible and $I=H^0(X, I\cO_X)$. 
If $I$ is represented on $X$, then there exists an 
anti-nef cycle $Z$ such that $I\cO_X=\cO_X(-Z)$;
$I$ is also said to be \textit{represented by} $Z$ and 
write $I=I_Z$. 
Note that such an ideal $I$ is integrally closed in $A$. 
\par  
We say that $\cO_X(-Z)$ {\em has no fixed component}
if $H^0(\cO_X(-Z))\ne H^0(\cO_X(-Z-E_i))$ for every $E_i\subset E$, 
i.e., the base locus of the linear system $H^0(\cO_X(-Z))$ does not contain any component of $E$.
\par 
We denote by $h^1(\cO_X(-Z))$ 
the length $\ell_A(H^1(\cO_X(-Z)))$. 
It is known that $h^1(\cO_X)$ is independent of the choice 
of the resolution of singularities. 
The invariant $p_g(A):=h^1(\cO_X)$ 
is called the \textit{geometric genus} of $A$.

\begin{thm}[{\cite[Theorem 3.1]{OWYgood}}]\label{t:lepg}
Let $Z>0$ be a cycle. 
Suppose that $\cO_X(-Z)$ has no fixed component.
Then we have the following.
\begin{enumerate}
 \item $h^1(\cO_X(-Z))\le p_g(A)$.
 \item If $h^1(\cO_X(-Z))=p_g(A)$, then $\cO_X(-Z)$ is generated $($by global sections$)$.
 \end{enumerate} 
\end{thm}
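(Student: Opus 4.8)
The plan is to compare $H^1(\cO_X(-Z))$ with $H^1(\cO_X)=p_g(A)$ by means of a single general global section, so that everything reduces to one cohomological vanishing that is essentially free. First I would extract from the no-fixed-component hypothesis a good section: since the base field is algebraically closed, hence infinite, and $H^0(\cO_X(-Z-E_i))\subsetneq H^0(\cO_X(-Z))$ for every $i$, a general $s\in H^0(\cO_X(-Z))$ avoids all of the finitely many proper subspaces $H^0(\cO_X(-Z-E_i))$. For such $s$ the effective divisor $D:=\di(s)-Z$ contains no component of $E$. Regarding $s$ as a map of sheaves $\cO_X\to\cO_X(-Z)$ gives
\[ 0\longrightarrow \cO_X \xrightarrow{\ \cdot s\ } \cO_X(-Z)\longrightarrow \cL \longrightarrow 0, \qquad \cL:=\cO_X(-Z)|_D. \]

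The crucial point is that $\cL$ has vanishing first cohomology. Indeed $f|_D\colon D\to C:=\Spec(A/sA)$ is proper, and its fibre over the closed point is the finite set $D\cap E$ (no component of $D$ is contracted, since $D$ carries no exceptional component), so $f|_D$ is finite; as $C$ is affine this makes $D$ affine, whence $H^1(X,\cL)=H^1(D,\cL)=0$. The long exact cohomology sequence of the displayed sequence then yields a surjection $H^1(\cO_X)\twoheadrightarrow H^1(\cO_X(-Z))$, so $h^1(\cO_X(-Z))\le h^1(\cO_X)=p_g(A)$, which is (1).

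For (2) I would read the equality case off the same sequence. If $h^1(\cO_X(-Z))=p_g(A)$, then the surjection $H^1(\cO_X)\to H^1(\cO_X(-Z))$ is an isomorphism of finite-length modules, and exactness forces $H^0(\cO_X(-Z))\to H^0(\cL)$ to be surjective. Suppose now that some $p$ were a base point of $\cO_X(-Z)$; then $p\in D$, and because $D$ is affine the line bundle $\cL$ is generated by global sections at $p$. Lifting a generator through the surjection $H^0(\cO_X(-Z))\twoheadrightarrow H^0(\cL)$ produces $t\in H^0(\cO_X(-Z))$ whose image generates the stalk $\cL_p=\cO_X(-Z)_p/I_{D,p}\cO_X(-Z)_p$; since $I_{D,p}\subset\m_p$, the section $t$ already generates $\cO_X(-Z)_p$, contradicting that $p$ is a base point. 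Hence $\cO_X(-Z)$ is globally generated.

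The main obstacle is not the vanishing of $H^1(\cL)$, which becomes transparent once one observes that $D$ is affine; rather it is securing the two geometric inputs that make the argument run. These are (i) that a general section's zero divisor carries no exceptional component — this is precisely where the no-fixed-component hypothesis and the general-position argument over the infinite field are used — and (ii) in part (2), the passage from global generation of $\cL$ on the affine curve $D$ to honest generation of $\cO_X(-Z)$ at a point of $X$, which requires checking that every base point lies on the chosen $D$ and that generation modulo $I_D$ upgrades to generation of the full stalk.
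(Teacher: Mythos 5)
Your proof is correct, and it follows the same route as the argument behind the cited result (the paper itself only quotes Theorem~\ref{t:lepg} from \cite{OWYgood} without reproducing a proof): a general section $s$ avoiding the subspaces $H^0(\cO_X(-Z-E_i))$, the exact sequence $0\to\cO_X\xrightarrow{s}\cO_X(-Z)\to\cO_D(-Z)\to 0$ with $D$ finite over $\Spec(A/sA)$ hence affine, the resulting surjection $H^1(\cO_X)\twoheadrightarrow H^1(\cO_X(-Z))$, and in the equality case the lift of generators of $\cO_D(-Z)$ plus Nakayama. No gaps.
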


\par 
Based upon the theorem above, the authors \cite{OWYgood} introduced the notion of $p_g$-ideals. 
The definition of $p_g$-ideal is independent of the choice 
of the  resolution of singularities 
(\cite[Lemma 3.4]{OWYgood}). 

\begin{defn}[\textbf{$p_g$-ideals, $p_g$-cycles}] \label{d:pgideal}
A cycle $Z>0$ is called a {\em $p_g$-cycle} if $\cO_X(-Z)$ is generated and $h^1(X,\cO_X(-Z))=p_g(A)$.
An $\m$-primary ideal $I$ is called a {\em $p_g$-ideal} if $I$ is represented by a $p_g$-cycle on some resolution. 
\end{defn}

\par 
Assume that $p_g(A)=0$. Such a ring $A$ is called a rational singularity. 
Then every (anti-nef) cycle is a $p_g$-cycle (Lipman \cite{Li}). 
\par 
If $p_g(A) > 0$, then there exists the smallest cycle 
$C_X > 0$ on $X$ such that $h^1(C_X)=p_g(A)$. 
The cycle $C_X$ is called the \textit{cohomological cycle}
on $X$. 
Note that if $A$ is Gorenstein and the resolution of singularities $f \colon X \to \Spec A$ is minimal, then 
$C_X=Z_{K_X}$, where $K_X$ denotes the canonical divisor and $Z_{K_X}$ is the effective cycle 
such that $K_X=-Z_{K_X}$. 

We have the following characterization of $p_g$-ideals  
in terms of cohomological cycle. 

\begin{prop} [{\cite[Proposition 3.10]{OWYgood}}] \label{p:CX} 
Assume that $p_g(A)>0$.
Let $Z>0$ be a cycle such that $\cO_X(-Z)$ has no fixed component.
Then $Z$ is a $p_g$-cycle if and only if
$\cO_{C_X}(-Z)\cong \cO_{C_X}$.
\end{prop}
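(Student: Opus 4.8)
The plan is to combine \thmref{t:lepg} with the sequence obtained by restricting the invertible sheaf $\cO_X(-Z)$ to the cohomological cycle $C_X$. First I would eliminate the generation requirement in the notion of $p_g$-cycle: since $\cO_X(-Z)$ has no fixed component, \thmref{t:lepg}(2) shows that $\cO_X(-Z)$ is automatically generated as soon as $h^1(\cO_X(-Z))=p_g(A)$. Hence it suffices to prove
\[
h^1(\cO_X(-Z))=p_g(A)\iff \cO_{C_X}(-Z)\cong\cO_{C_X},
\]
and throughout I may use the defining properties of $C_X$, namely $C_X>0$ and $h^1(\cO_{C_X})=p_g(A)=h^1(\cO_X)$.

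The central device is the sequence obtained by tensoring $0\to\cO_X(-C_X)\to\cO_X\to\cO_{C_X}\to 0$ with $\cO_X(-Z)$:
\[
0\longrightarrow \cO_X(-Z-C_X)\longrightarrow \cO_X(-Z)\longrightarrow \cO_{C_X}(-Z)\longrightarrow 0.
\]
Because the fibres of $f$ are at most one-dimensional we have $H^2(\cO_X(-Z-C_X))=0$, so the long exact cohomology sequence yields a surjection $\rho\colon H^1(\cO_X(-Z))\twoheadrightarrow H^1(\cO_{C_X}(-Z))$. This already settles the easy implication: if $\cO_{C_X}(-Z)\cong\cO_{C_X}$ then $h^1(\cO_{C_X}(-Z))=h^1(\cO_{C_X})=p_g(A)$, and surjectivity of $\rho$ forces $h^1(\cO_X(-Z))\ge p_g(A)$; combined with the bound $h^1(\cO_X(-Z))\le p_g(A)$ of \thmref{t:lepg}(1) this gives equality, so $Z$ is a $p_g$-cycle.

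For the converse I would argue in two steps. The first is to upgrade $\rho$ to an isomorphism, that is, to show $h^1(\cO_{C_X}(-Z))=h^1(\cO_X(-Z))=p_g(A)$, equivalently that the connecting map $H^1(\cO_X(-Z-C_X))\to H^1(\cO_X(-Z))$ vanishes. Here I would use that $\cO_X(-Z)$ is now globally generated (\thmref{t:lepg}(2)) together with the stabilisation of first cohomology along the exceptional fibre, $H^1(\cO_X(-Z))\cong \varprojlim_n H^1(\cO_{nE}(-Z))$, played off against the characterising property of $C_X$ for the untwisted sheaf. The second step is a computation on the one-dimensional scheme $C_X$: global generation makes $\cO_{C_X}(-Z)$ a globally generated line bundle with $\deg_{E_i}\cO_{C_X}(-Z)=-Z\cdot E_i\ge 0$ on every component $E_i$ of $C_X$, so $-Z\cdot C_X=\deg_{C_X}\cO_X(-Z)\ge 0$. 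Riemann--Roch on $C_X$ gives $\chi(\cO_{C_X}(-Z))=\chi(\cO_{C_X})-Z\cdot C_X$, and feeding in the equality of the $h^1$'s from the first step forces $-Z\cdot C_X=h^0(\cO_{C_X}(-Z))-h^0(\cO_{C_X})$; pushing this through Serre duality on $C_X$, with $\omega_{C_X}=\cO_X(K_X+C_X)|_{C_X}$ by adjunction, is the cleanest way to conclude that the degree is zero on every component and that the globally generated, degree-zero bundle $\cO_{C_X}(-Z)$ is trivial.

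The main obstacle is the first step of the converse, the injectivity of $\rho$. Surjectivity is automatic from the sequence above, but controlling the image of $H^1(\cO_X(-Z-C_X))$ inside $H^1(\cO_X(-Z))$ requires knowing that $C_X$ carries \emph{all} of the first cohomology of the globally generated sheaf $\cO_X(-Z)$, not merely of $\cO_X$. I expect this to follow from the global generation of $\cO_X(-Z)$ together with the definition of $C_X$ as the smallest cycle realising $H^1(\cO_X)$, but it is precisely the point where the special role of the cohomological cycle is genuinely used, and the step that demands the most care.
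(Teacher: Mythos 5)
Your reduction of ``$p_g$-cycle'' to the single condition $h^1(\cO_X(-Z))=p_g(A)$ via \thmref{t:lepg}(2) is correct, and your proof of the implication $\cO_{C_X}(-Z)\cong\cO_{C_X}\Rightarrow Z$ is a $p_g$-cycle is complete: the surjection $H^1(\cO_X(-Z))\twoheadrightarrow H^1(\cO_{C_X}(-Z))$ (valid because $f$ has one-dimensional fibres over an affine base, so $H^2$ vanishes) together with $h^1(\cO_{C_X})=p_g(A)$ and \thmref{t:lepg}(1) does the job. The converse, however, is not proved. Your ``Step 1'' --- that $\rho$ is injective, i.e.\ $h^1(\cO_{C_X}(-Z))=p_g(A)$ --- is exactly the substantive content of this direction, and you state explicitly that you only ``expect'' it to follow from global generation and the definition of $C_X$. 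No mechanism is given for why the image of $H^1(\cO_X(-Z-C_X))$ in $H^1(\cO_X(-Z))$ should vanish; the definition of $C_X$ only controls the untwisted sheaf $\cO_X$, so this is a genuine gap, not a routine verification. Moreover, even granting Step 1, your Step 2 does not close: from $h^1(\cO_{C_X}(-Z))=h^1(\cO_{C_X})$, Serre duality gives $h^0(\omega_{C_X}\otimes\cO_{C_X}(Z))=h^0(\omega_{C_X})$ with $\cO_{C_X}(Z)$ of nonpositive degree on each component, and a negative twist need not decrease $h^0$ (compare $h^0(\cO_{\PP^1}(-1))=h^0(\cO_{\PP^1}(-2))=0$), so you cannot conclude $Z\cdot C_X=0$ from this alone.

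The point you are missing is that the forward direction is really the statement $\supp C_X\subset Z^{\bot}$, i.e.\ $Z\cdot E_i=0$ for every component $E_i\le C_X$; once that is known, $\cO_{C_X}(-Z)$ is a globally generated line bundle of degree zero on every component of $C_X$, so a general section is nowhere vanishing and trivializes it --- no Riemann--Roch or duality is needed. The paper itself gives no proof here (the proposition is imported from \cite[Proposition 3.10]{OWYgood}), but the argument there runs through precisely this inclusion of supports, using a vanishing of the form $H^1(\cO_X(-nD-mZ))=0$ for a suitable cycle $D$ with $\supp D=Z^{\bot}$ and $DE_i<0$ on $Z^{\bot}$ --- the same device quoted in the proof of \proref{p:coho} --- to identify $h^1(\cO_X(-mZ))$ for large $m$ with $h^1$ of a cycle supported on $Z^{\bot}$, and then the minimality of the cohomological cycle to force $C_X$ inside $Z^{\bot}$. (One also needs $h^1(\cO_X(-mZ))=p_g(A)$ for all $m$, which follows from \proref{p:sg}(2) applied with $Z'=Z$.) Without some such input your exact sequence through $C_X$ cannot detect where $C_X$ sits relative to $Z^{\bot}$, which is why Step 1 resists the direct attack you propose.
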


\par 
The following theorem plays an important role in this paper. 

\begin{thm}[\textbf{Kato's Riemann-Roch formula} \cite{kato} ]
\label{t:kato}
Let $I$ be an $\m$-primary ideal 
such that  
$I=H^0(\cO_X(-Z))$ for some anti-nef cycle $Z$ on $X$. 
Then we have 
\[
\ell_A(A/I) + h^1(\cO_X(-Z))=-\dfrac{Z^2+K_XZ}{2} + p_g(A),
\]
where $K_X$ denotes the canonical divisor of $X$. 

\end{thm}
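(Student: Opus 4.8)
The plan is to derive the formula from the short exact sequence of sheaves on $X$ attached to the anti-nef cycle $Z$, together with the classical Riemann--Roch (genus) formula for cycles on the smooth surface $X$. First I would write down
\[
0 \to \cO_X(-Z) \to \cO_X \to \cO_Z \to 0,
\]
where $\cO_Z$ is the structure sheaf of the effective divisor $Z$, supported on $E$. Passing to the long exact cohomology sequence and invoking three facts---that $f_*\cO_X = A$ since $A$ is normal and $f$ is proper birational, so $H^0(\cO_X)=A$; that $H^0(\cO_X(-Z)) = I$ by hypothesis, which is precisely the meaning of ``$I$ is represented by $Z$''; and that $R^i f_* = 0$ for $i \ge 2$ because $f$ has fibres of dimension $\le 1$ over an affine base, whence $H^2(\cO_X(-Z))=0$---I obtain the exact sequence
\[
0 \to A/I \to H^0(\cO_Z) \to H^1(\cO_X(-Z)) \to H^1(\cO_X) \to H^1(\cO_Z) \to 0.
\]

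Each of the five nonzero terms is an $A$-module of finite length, being supported on the exceptional fibre (for $A/I$ this is because $I$ is $\m$-primary). Taking the alternating sum of lengths along the sequence and rearranging, I would obtain
\[
\ell_A(A/I) + h^1(\cO_X(-Z)) = \chi(\cO_Z) + h^1(\cO_X),
\]
where $\chi(\cO_Z) = \ell_A(H^0(\cO_Z)) - \ell_A(H^1(\cO_Z))$. Since $h^1(\cO_X) = p_g(A)$ by definition, it remains only to identify $\chi(\cO_Z)$.

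The last step is the Riemann--Roch computation $\chi(\cO_Z) = -(Z^2 + K_X Z)/2$, the genus formula for an effective cycle on a smooth surface. For a single smooth component it is adjunction, $2p_a(E_i) - 2 = E_i^2 + K_X E_i$ with $p_a(E_i) = 1 - \chi(\cO_{E_i})$, and the general case follows by induction on the coefficients of $Z$ using the exact sequences
\[
0 \to \cO_{E_i}(-Z') \to \cO_{Z'+E_i} \to \cO_{Z'} \to 0
\]
and the additivity of both sides. I expect this identification to be the only delicate point, since $X$ is not projective; however, all the intersection numbers $Z^2$ and $K_X Z$ are computed against the compact exceptional divisor $E$ on which $Z$ is supported, so they are well defined and the genus formula applies verbatim. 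Substituting $\chi(\cO_Z) = -(Z^2+K_X Z)/2$ and $h^1(\cO_X)=p_g(A)$ into the displayed identity yields the asserted formula.
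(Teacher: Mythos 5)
The paper offers no proof of this statement at all: it is quoted verbatim as Kato's Riemann--Roch formula and attributed to the reference \cite{kato}, so there is nothing internal to compare your argument against. Your derivation is the standard one and is correct: the sequence $0 \to \cO_X(-Z) \to \cO_X \to \cO_Z \to 0$, the identifications $H^0(\cO_X)=A$ (normality plus properness), $H^0(\cO_X(-Z))=I$ (hypothesis), and $H^2(\cO_X(-Z))=0$ (fibres of dimension $\le 1$ over an affine base) give the five-term exact sequence of finite-length modules, whose alternating sum yields $\ell_A(A/I)+h^1(\cO_X(-Z))=\chi(\cO_Z)+p_g(A)$; the identification $\chi(\cO_Z)=-(Z^2+K_XZ)/2$ by adjunction plus induction on the coefficients of $Z$ is the usual genus formula for cycles supported on the compact exceptional set, and your remark that all intersection numbers are computed against the compact divisor $E$ correctly disposes of the non-projectivity of $X$. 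In short, you have supplied a complete, self-contained proof of a result the authors only cite.
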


\medskip
In what follows, let us discuss whether $Z+Z'$ is a $p_g$-cycle. 
Let $Z$, $Z'$ be anti-nef cycles on the resolution $X \to \Spec A$
such that $\cO_X(-Z)$ and $\cO_X(-Z')$ are generated. 
Take general elements $a \in I_{Z}$, $b \in I_{Z'}$ and put 
\begin{eqnarray*}
\varepsilon(Z,Z')
&:= & \ell_A(I_{Z+Z'}/a I_{Z'}+bI_{Z}) \\
& = & p_g(A)-h^1(\cO_X(-Z))-h^1(\cO_X(-Z'))+h^1(\cO_X(-Z-Z')). 
\end{eqnarray*}
Then $0 \le \varepsilon(Z,Z') \le p_g(A)$; see \cite[Proposition 2.6]{OWYgood}.

\par 
The following proposition gives an important property of $p_g$-ideals. 

\begin{prop}[\textrm{see \cite[Theorem 3.5]{OWYgood}}]\label{p:sg}
Let $Z$, $Z'$ be cycles above. 
\begin{enumerate}
\item 
If $Z$ is a $p_g$-cycle on the resolution $X$,  
then $\varepsilon(Z, Z') =0$ for any $Z'$.   
In particular, if $a \in I_{Z}$ and $b \in I_{Z'}$ are general elements, then
\[
I_{Z+Z'}=a I_{Z'}+b I_{Z}.
\]
\item Assume that $Z$ is a $p_g$-cycle. 
Then $Z'$ is a $p_g$-cycle if and only if
so is $Z+Z'$. 
\item 
If $Z+Z'$ is a $p_g$-cycle for some cycle $Z'$, 
then so is $Z$. 
\end{enumerate}
\end{prop}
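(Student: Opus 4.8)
The plan is to reduce all three parts to one monotonicity property. For an anti-nef cycle $W$ with $\cO_X(-W)$ generated, set
\[
q(W):=p_g(A)-h^1(\cO_X(-W)).
\]
By \thmref{t:lepg}(1) we have $q(W)\ge 0$, and for such $W$ the cycle $W$ is a $p_g$-cycle if and only if $q(W)=0$ (\defref{d:pgideal}). Since $\cO_X(-Z-Z')=\cO_X(-Z)\otimes\cO_X(-Z')$ is again generated, $q(Z+Z')$ is defined, and the displayed formula for $\varepsilon$ reads $\varepsilon(Z,Z')=q(Z)+q(Z')-q(Z+Z')$. Thus the stated bound $\varepsilon(Z,Z')\ge 0$ is exactly subadditivity, $q(Z+Z')\le q(Z)+q(Z')$. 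Granting this, everything will follow once I prove the monotonicity $q(W)\le q(W+W')$ for anti-nef $W,W'$ with $\cO_X(-W),\cO_X(-W')$ generated.

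The monotonicity is the heart of the matter. To prove $h^1(\cO_X(-W-W'))\le h^1(\cO_X(-W))$, I would pick a general section $s\in H^0(\cO_X(-W'))=I_{W'}$. As $\cO_X(-W')$ has no fixed component, a general $s$ vanishes along no component $E_i$ of $E$, so its divisor of zeros $B:=\di(s)$ meets $E$ in only finitely many points. Multiplication by $s$ gives the short exact sequence
\[
0\longrightarrow \cO_X(-W)\xrightarrow{\ \cdot s\ }\cO_X(-W-W')\longrightarrow \cF_B\longrightarrow 0,
\]
where $\cF_B:=\cO_X(-W-W')|_B$ is an invertible sheaf on the effective Cartier divisor $B$. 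Because $B$ meets the exceptional set in finitely many points, the induced map $f|_B\colon B\to\Spec A$ is proper and quasi-finite, hence finite; consequently $R^1(f|_B)_*\cF_B=0$ and, $\Spec A$ being affine, $H^1(X,\cF_B)=0$. The long exact cohomology sequence then yields a surjection $H^1(\cO_X(-W))\twoheadrightarrow H^1(\cO_X(-W-W'))$, i.e. $h^1(\cO_X(-W-W'))\le h^1(\cO_X(-W))$, as required. I expect this vanishing $H^1(X,\cF_B)=0$ --- resting on the generation hypothesis, which keeps $B$ away from the components of $E$ so that $f|_B$ is finite --- to be the only real difficulty.

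With subadditivity and monotonicity in hand, the three parts are formal. For $(1)$, if $Z$ is a $p_g$-cycle then $q(Z)=0$; monotonicity applied to $(W,W')=(Z',Z)$ gives $q(Z')\le q(Z+Z')$, while subadditivity gives $q(Z+Z')\le q(Z)+q(Z')=q(Z')$, so $q(Z+Z')=q(Z')$ and hence $\varepsilon(Z,Z')=q(Z)+q(Z')-q(Z+Z')=0$; the identity $I_{Z+Z'}=aI_{Z'}+bI_Z$ for general $a\in I_Z$, $b\in I_{Z'}$ is then immediate from $\varepsilon(Z,Z')=\ell_A(I_{Z+Z'}/(aI_{Z'}+bI_Z))$. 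For $(2)$, part $(1)$ gives $q(Z+Z')=q(Z')$, whence $Z'$ is a $p_g$-cycle ($q(Z')=0$) if and only if $Z+Z'$ is ($q(Z+Z')=0$). For $(3)$, if $Z+Z'$ is a $p_g$-cycle then $q(Z+Z')=0$, and monotonicity applied to $(W,W')=(Z,Z')$ gives $0\le q(Z)\le q(Z+Z')=0$, so $q(Z)=0$ and $Z$ is a $p_g$-cycle.
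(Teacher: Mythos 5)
Your proof is correct, and its technical core coincides with the paper's. For part (3) the paper does exactly what you do: it takes a general $\alpha\in H^0(\cO_X(-Z'))$, forms the exact sequence $0\to \cO_X(-Z)\to\cO_X(-Z-Z')\to\cC\to 0$, and uses that $\cC$ is supported on a curve meeting $E$ in finitely many points (hence finite over $\Spec A$, hence with vanishing $H^1$) to get the surjection $H^1(\cO_X(-Z))\twoheadrightarrow H^1(\cO_X(-Z-Z'))$ and conclude via \thmref{t:lepg}. This is precisely your ``monotonicity'' $q(W)\le q(W+W')$, which also underlies \lemref{l:nZ}(1). Where you genuinely diverge is in parts (1) and (2): the paper simply cites \cite[Theorem 3.5]{OWYgood}, whereas you rederive them by combining monotonicity with the nonnegativity $\varepsilon(Z,Z')\ge 0$ (equivalently, subadditivity of $q$), squeezing $q(Z')\le q(Z+Z')\le q(Z)+q(Z')=q(Z')$ when $q(Z)=0$. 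That squeeze is a clean, self-contained argument; its only external inputs are the two displayed facts about $\varepsilon$ in Section 2 (the length formula and $\varepsilon\ge 0$, from \cite[Proposition 2.6]{OWYgood}), and note that $\varepsilon\ge 0$ is elementary since $aI_{Z'}+bI_Z\subset I_{Z+Z'}$. So your write-up buys a uniform treatment of all three parts from one cohomological lemma, at the cost of assuming the cohomological interpretation of $\varepsilon$; the paper's version is shorter but leans on the earlier reference for the substantive halves (1) and (2).
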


\begin{proof}
(1),(2) It follows from \cite[Theorem 3.5]{OWYgood}.
\par \vspace{2mm}
(3) Let $\alpha \in H^0(\cO_X(-Z'))$ be a general element.
From the exact sequence
\begin{equation}
\label{eq:Z1Z2}
0\to \cO_X(-Z)\xrightarrow{\times \alpha} \cO_X(-Z-Z') \to \cC \to 0
\end{equation}
we obtain $h^1(\cO_X(-Z))\ge h^1(\cO_X(-Z-Z') )=p_g(A)$.
Hence $h^1(\cO_X(-Z))=p_g(A)$ by Theorem \ref{t:lepg}. 
\end{proof}

\begin{cor}[{\cite[Corollary 3.6]{OWYgood}}] \label{c:stable}
Let $I$, $J$ be $\m$-primary integrally closed ideals. 
\begin{enumerate}
\item Assume that $I$ is a $p_g$-ideal. 
For any $\m$-primary integrally closed ideal $J$ and 
general elements $a \in I$, $b \in J$, we have 
\[
IJ=aJ+bI. 
\]
\item If $I$ and $J$ are $p_g$-ideals, then $IJ$ is also a $p_g$-ideal.  
\item If $IJ$ is a $p_g$-ideal, then so are $I$ and $J$.
\end{enumerate}
\end{cor}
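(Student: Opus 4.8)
The plan is to deduce this corollary about products of ideals from the cycle-level statement in \proref{p:sg} by working on a single resolution on which everything is represented simultaneously. First I would fix a resolution $f\colon X \to \Spec A$ on which both $I$ and $J$ are represented (for instance, any resolution dominating the blow-ups of $I$ and of $J$), and write $I=I_Z$, $J=I_{Z'}$ for anti-nef cycles $Z$, $Z'$. Since $I$ and $J$ are represented on $X$, the ideal sheaves $I\cO_X=\cO_X(-Z)$ and $J\cO_X=\cO_X(-Z')$ are invertible and globally generated, so $Z$ and $Z'$ are cycles of the type required in \proref{p:sg}. By the resolution-independence of the $p_g$-property (\cite[Lemma 3.4]{OWYgood}), $I$ (resp.\ $J$) is a $p_g$-ideal if and only if $Z$ (resp.\ $Z'$) is a $p_g$-cycle on $X$. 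The single computation I would record at the outset is the identity
\[
\overline{IJ}=H^0(\cO_X(-Z-Z'))=I_{Z+Z'},
\]
which holds because $(IJ)\cO_X=(I\cO_X)(J\cO_X)=\cO_X(-Z-Z')$ is invertible, $Z+Z'$ is again anti-nef, and the integral closure of $IJ$ is the ring of global sections of the invertible sheaf it generates.

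Granting this, the three parts become one-line translations. For (1), with $Z$ a $p_g$-cycle, \proref{p:sg}(1) gives $I_{Z+Z'}=aI_{Z'}+bI_Z=aJ+bI$ for general $a\in I$, $b\in J$; since $a\in I$ and $b\in J$ force $aJ+bI\subseteq IJ\subseteq \overline{IJ}=I_{Z+Z'}$, the two ends coincide, whence $IJ=aJ+bI$ (and, as a byproduct, $IJ$ is integrally closed). For (2), if both $Z$ and $Z'$ are $p_g$-cycles then $Z+Z'$ is a $p_g$-cycle by \proref{p:sg}(2); combined with part (1), which shows $IJ=\overline{IJ}=I_{Z+Z'}$, we conclude that $IJ$ is represented by the $p_g$-cycle $Z+Z'$ and hence is a $p_g$-ideal. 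For (3), if $IJ$ is a $p_g$-ideal then it is in particular integrally closed, so $IJ=\overline{IJ}=I_{Z+Z'}$ with $Z+Z'$ a $p_g$-cycle; \proref{p:sg}(3) then forces $Z$ to be a $p_g$-cycle, and by the symmetric statement so is $Z'$, whence $I$ and $J$ are $p_g$-ideals.

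The main obstacle is not any single estimate but the bookkeeping of passing to a common resolution and aligning the two languages. One must check that representing $I$ and $J$ on $X$ genuinely makes $\cO_X(-Z)$ and $\cO_X(-Z')$ globally generated (so the hypotheses of \proref{p:sg} apply), and that a \emph{general} element of the ideal $I$ matches a \emph{general} global section of $\cO_X(-Z)$ as used there. Once the identification $\overline{IJ}=I_{Z+Z'}$ and the equivalence ``$p_g$-ideal $\Leftrightarrow$ $p_g$-cycle'' are in place, nothing further is needed beyond the corresponding three parts of \proref{p:sg}.
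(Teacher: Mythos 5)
Your proposal is correct and follows exactly the route the paper intends: the corollary is stated as an immediate ideal-level translation of Proposition~\ref{p:sg} (the paper itself only cites \cite[Corollary 3.6]{OWYgood} rather than writing out the translation), and your bookkeeping --- common resolution, the identity $\overline{IJ}=I_{Z+Z'}$, and resolution-independence of the $p_g$-property --- is precisely the standard bridge between the cycle and ideal statements. Nothing is missing.
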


\par 
The following properties characterize $p_g$-ideals;
see Theorem \ref{t:pgR}.

\begin{cor} \label{c:pgprop}
Assume that $I=I_Z$ is a $p_g$-ideal. 
Then $I^n$ is integrally closed for all $n \ge 1$, $I^2=QI$, and
$I\subset Q \colon I$. 
\end{cor}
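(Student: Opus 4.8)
The plan is to deduce all three assertions from the multiplicative behaviour of $p_g$-ideals recorded in \corref{c:stable}, combined with the basic fact that an ideal represented by a cycle is integrally closed.

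First I would treat the integral-closedness of the powers. By \corref{c:stable}(2) the product of two $p_g$-ideals is again a $p_g$-ideal, so a straightforward induction shows that $I^n$ is a $p_g$-ideal for every $n \ge 1$. Since a $p_g$-ideal is represented by a cycle and every such ideal is integrally closed (as noted just after the definition of ``represented on $X$''), it follows that $\overline{I^n}=I^n$ for all $n\ge 1$. This disposes of the first claim.

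Next I would establish $I^2=QI$. Because the residue field is algebraically closed, hence infinite, a minimal reduction of $I$ is generated by two general elements $a,b\in I$; set $Q=(a,b)$. Applying \corref{c:stable}(1) with $J=I$ (legitimate, as $I$ is itself an $\m$-primary integrally closed ideal) to general $a,b\in I$ yields
\[
I^2 = aI + bI = (a,b)I = QI.
\]
The inclusion $I\subset Q\colon I$ is then purely formal: it is equivalent to $I^2\subset Q$, and this is immediate from $I^2=QI\subset Q$.

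The one genuine subtlety lies in the second step, namely matching the unspecified ``general'' elements supplied by \corref{c:stable}(1) with those generating a minimal reduction. This is handled by genericity: both requirements (that $a,b$ lie in the dense locus of \corref{c:stable}(1) and that $(a,b)$ be a reduction) are generic, so a common general choice meets both, and $A$ being a two-dimensional normal local domain is Cohen-Macaulay, so $(a,b)$ is a system of parameters and a minimal reduction. To see that $I^2=QI$ is insensitive to the choice of minimal reduction, I would note that reduction number one over the Cohen-Macaulay ring $A$ forces $\gr_I(A)$ to be Cohen-Macaulay, whence the reduction number is the same for every minimal reduction; this is the point where the argument requires the most care, while the first and third assertions follow formally.
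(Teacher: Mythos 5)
Your proof is correct and follows exactly the route the paper intends: the paper states this corollary without proof, immediately after Corollary~\ref{c:stable}, and your derivation (powers are $p_g$-ideals hence integrally closed by \corref{c:stable}(2), $I^2=aI+bI=QI$ from \corref{c:stable}(1) with $J=I$ and general $a,b$ generating a minimal reduction, and $I\subset Q\colon I$ formally from $I^2\subset Q$) is precisely the intended argument. The extra care you take about matching general elements with a minimal reduction and about independence of the choice of $Q$ is sound, though only ``some'' minimal reduction is needed for the way the statement is used later.
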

\section{The normal Hilbert polynomials}\label{s:nHP}

Throughout this section, let $(A,\m)$ be a two-dimensional 
excellent 
normal local domain over an algebraically closed field. 
It is well-known that there exist integers $\overline{e}_0(I)$, $\overline{e}_1(I)$, $\overline{e}_2(I)$ such that 
\[
\ell_A(A/\overline{I^{n+1}})=\bar e_0(I) \genfrac{(}{)}{0pt}{0}{n+2}{2}
-\bar e_1(I)\genfrac{(}{)}{0pt}{0}{n+1}{1}
+\bar e_2(I) \quad \text{for large enough}\;\; n \gg 0. 
\]
Then 
\[
P_I(n)= \bar e_0(I) \genfrac{(}{)}{0pt}{0}{n+2}{2}
-\bar e_1(I)\genfrac{(}{)}{0pt}{0}{n+1}{1}+\bar e_2(I)
\]
is called the \textit{normal Hilbert polynomial} of $I$. 

\begin{lem}\label{l:nZ}
Let $Z>0$ be a cycle such that $\cO_X(-Z)$ has no fixed component.
Then$:$ 
\begin{enumerate}
\item $h^1(\cO_X(-nZ))\ge h^1(\cO_X(-(n+1)Z))$ for $n\ge 0$.
\item Let 
\[
n_0=\min\defset{n\in \Z_{\ge 0}}{ \; 
h^1(\cO_X(-nZ))= h^1(\cO_X(-(n+1)Z))}.
\]
Then $n_0\le p_g(A)$ and 
$h^1(\cO_X(-nZ))=h^1(\cO_X(-n_0Z))$ for $n\ge n_0$.
\end{enumerate}
\end{lem}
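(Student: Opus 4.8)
The plan is to analyze everything through multiplication by a single general section, treating the two assertions separately.

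For (1), fix a general element $\alpha\in H^0(\cO_X(-Z))\subseteq H^0(\cO_X)=A$. Because $\cO_X(-Z)$ has no fixed component, its base locus contains no $E_i$, so for general $\alpha$ the divisor of zeros $C$ of $\alpha$, taken with respect to the invertible sheaf $\cO_X(-Z)$, contains no exceptional component. Multiplication by $\alpha$ then gives, for every $n\ge 0$, a short exact sequence
\[
0\to \cO_X(-nZ)\xrightarrow{\ \times\alpha\ }\cO_X(-(n+1)Z)\to \cC_n\to 0,
\]
where $\cC_n=\cO_C\otimes\cO_X(-(n+1)Z)$ is supported on $C$. Since $C$ has no exceptional component, $f|_C\colon C\to\Spec A$ is finite, so $R^1f_*\cC_n=0$ and hence $H^1(\cC_n)=0$. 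Passing to cohomology, the induced map $H^1(\cO_X(-nZ))\to H^1(\cO_X(-(n+1)Z))$ is surjective, which gives (1). I expect this part to be routine.

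For the first half of (2), observe that $q_n:=h^1(\cO_X(-nZ))$ is a non-increasing sequence of non-negative integers with $q_0=h^1(\cO_X)=p_g(A)$, so it is eventually constant and $n_0$ is well defined and finite. By minimality of $n_0$ together with part (1), the inequality $q_{k}>q_{k+1}$ is strict for $0\le k<n_0$, hence $q_0\ge q_{n_0}+n_0$; since $q_{n_0}\ge 0$ this yields $n_0\le p_g(A)$. The same telescoping shows that it remains only to prove the persistence statement: once $q_{n_0}=q_{n_0+1}$, one has $q_n=q_{n_0}$ for all $n\ge n_0$.

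This persistence is the heart of the lemma and the step I expect to be the main obstacle. The surjections of (1) show that $M:=\bigoplus_{n\ge0}H^1(\cO_X(-nZ))$ is generated in degree $0$ over the section ring $\bigoplus_{n\ge0}H^0(\cO_X(-nZ))$, and that $q_n=q_{n+1}$ is equivalent to injectivity of $\times\alpha$ on $H^1(\cO_X(-nZ))$; so I must show that this injectivity, once it occurs, propagates to all higher degrees. The naive analogue for the powers of a single endomorphism does not apply here, because $\times\alpha$ raises degree and, viewed as a module over $k[\alpha]$, $M$ could a priori have gaps in the exponents of its $\alpha$-torsion; excluding such gaps requires genuine input from the geometry of $f$. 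I would supply this via Grothendieck (relative) duality for $f$: using $R^1f_*\omega_X=0$, one identifies the Matlis dual of $H^1(\cO_X(-nZ))$ with a space of global sections of $\omega_X\otimes\cO_X(nZ)$ on the relevant exceptional cycle, under which $\times\alpha$ dualizes to a restriction-type map between section spaces that can only stabilize monotonically. Making this identification compatible with the $\alpha$-action—so that injectivity at the single step $n_0$ forces injectivity at every subsequent step—is precisely the technical crux; alternatively one can run the same argument intrinsically on the cohomological cycle $C_X$ via \proref{p:CX}.
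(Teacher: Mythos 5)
Your part (1) and the derivation of the bound $n_0\le p_g(A)$ are correct and coincide with what the paper does (the paper's proof of (1) is exactly your multiplication-by-a-general-section sequence, quoted from the argument of Proposition~\ref{p:sg}). The genuine gap is precisely where you locate it: the persistence statement that $q_{n_0}=q_{n_0+1}$ forces $q_n=q_{n_0}$ for all $n\ge n_0$. You correctly observe that injectivity of $\times\alpha$ on $H^1(\cO_X(-n_0Z))$ does not formally propagate to higher degrees, but your proposed remedy --- a duality identification under which $\times\alpha$ ``can only stabilize monotonically'' --- is left as an acknowledged technical crux and is not carried out. Dualizing merely converts the surjections from (1) into injections between section spaces; an increasing chain of subspaces need not stabilize at the first place where two consecutive terms agree, so no propagation mechanism has actually been supplied. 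As written, the proof is incomplete at its central point.

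The paper closes this gap with a much more elementary device: two general sections $a,b\in H^0(\cO_X(-Z))$ give a Koszul-type exact sequence
\[
0 \to \cO_X(-nZ) \to \cO_X(-(n+1)Z)^{\oplus 2} \to \cO_X(-(n+2)Z)\to 0,
\]
whose cohomology sequence yields $h^1(\cO_X(-nZ))\ge 2h^1(\cO_X(-(n+1)Z))-h^1(\cO_X(-(n+2)Z))$, i.e.\ the successive differences $q_n-q_{n+1}$ are non-increasing in $n$. Combined with your part (1), which says each difference is $\ge 0$, one vanishing difference forces all subsequent differences to vanish. If you want to repair your argument, replacing the single section $\alpha$ by this two-section Koszul sequence is the missing idea; no duality is needed.
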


\begin{proof}
(1) follows from the argument of \proref{p:sg}.
\par 
(2) From the exact sequence
$$
0 \to \cO_X(-nZ) \to  \cO_X(-(n+1)Z)^{\oplus 2} \to \cO_X(-(n+2)Z)\to 0,
$$
we obtain that $h^1(\cO_X(-nZ))\ge 2 h^1(\cO_X(-(n+1)Z))
- h^1(\cO_X(-(n+2)Z))$.
Thus if $h^1(\cO_X(-nZ))=h^1(\cO_X(-(n+1)Z))$ is satisfied, then 
$h^1(\cO_X(-(n+1)Z))=h^1(\cO_X(-(n+2)Z))$ holds true.
\end{proof}

\par 
By using Riemann-Roch formula, we describe the normal Hilbert-polynpmial of $I=I_Z$ in terms of the cycle $Z$.

\begin{thm} \label{t:normalHP}
Assume that $I$ is represented by a cycle $Z > 0$. 
Let $P_I(n)$ be a normal Hilbert-polynomial of $I$. 
Then 
\begin{enumerate}
\item $P_I(n)=\ell_A(A/\ol{I^{n+1}})$ for all $n \ge p_g(A)-1$. 
\vspace{1mm}
\item $\bar e_0(I)=e_0(I)=-Z^2$.  
\vspace{1mm}
\item $\bar e_1(I)=e_0(I) - \ell_A(A/I) + \big(p_g(A)-h^1(\cO_X(-Z))\big) = \dfrac{-Z^2+ZK_X}{2}$.  
\vspace{1mm}
\item $\bar e_2(I)= p_g(A)-h^1(\cO_X(-nZ))$ for all $n\ge p_g(A)$.
\end{enumerate}
\end{thm}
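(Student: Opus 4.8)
The plan is to compute the leading terms of the normal Hilbert function $\ell_A(A/\overline{I^{n+1}})$ directly, using the key fact that $\overline{I^{n+1}} = H^0(\cO_X(-(n+1)Z))$ for the representing cycle $Z$, together with Kato's Riemann-Roch formula (\thmref{t:kato}). First I would note that since $I=I_Z$ with $Z$ anti-nef and $\cO_X(-Z)$ generated (hence $I\cO_X=\cO_X(-Z)$ invertible and no fixed component), the integral closure $\overline{I^{n+1}}$ equals $H^0(\cO_X(-(n+1)Z))$; this identifies the normal Hilbert function with $\ell_A(A/I_{(n+1)Z})$. Applying \thmref{t:kato} to the anti-nef cycle $(n+1)Z$ yields
\[
\ell_A(A/\overline{I^{n+1}}) = -\frac{((n+1)Z)^2 + K_X\,(n+1)Z}{2} + p_g(A) - h^1(\cO_X(-(n+1)Z)).
\]
Expanding the quadratic in $n$ gives $-\tfrac{Z^2}{2}(n+1)^2 - \tfrac{K_XZ}{2}(n+1) + p_g(A) - h^1(\cO_X(-(n+1)Z))$.

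The second step is to control the error term $h^1(\cO_X(-(n+1)Z))$. Here I would invoke \lemref{l:nZ}: the sequence $h^1(\cO_X(-nZ))$ is non-increasing, stabilizes at some $n_0 \le p_g(A)$, and equals its stable value for all $n \ge n_0$. Thus for $n \ge p_g(A)-1$ (so that $n+1 \ge p_g(A) \ge n_0$), the term $h^1(\cO_X(-(n+1)Z))$ is constant, equal to $h^1(\cO_X(-mZ))$ for any $m \ge p_g(A)$. This is precisely what makes the right-hand side a genuine polynomial in $n$ for $n \ge p_g(A)-1$, proving (1) and simultaneously identifying the stable value of $h^1$ that will appear in the constant term. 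Matching the expanded expression against the definition $P_I(n) = \bar e_0(I)\binom{n+2}{2} - \bar e_1(I)\binom{n+1}{1} + \bar e_2(I)$ then reads off the coefficients.

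For the coefficient extraction I would rewrite $\binom{n+2}{2} = \tfrac{(n+1)(n+2)}{2} = \tfrac{1}{2}(n+1)^2 + \tfrac{1}{2}(n+1)$ and compare with the $(n+1)$-expansion above. The coefficient of $\tfrac{1}{2}(n+1)^2$ gives $\bar e_0(I) = -Z^2$, which equals $e_0(I)$ since $Z$ is anti-nef (the multiplicity is computed by the self-intersection); this is (2). The linear coefficient then yields $\bar e_1(I) = \tfrac{-Z^2 + ZK_X}{2}$ after collecting the $\tfrac{1}{2}(n+1)$ from $\binom{n+2}{2}$ together with the $-\tfrac{K_XZ}{2}(n+1)$ term; the alternate form in (3) follows by substituting $e_0(I)=-Z^2$ and applying \thmref{t:kato} to $Z$ itself (the case $n=0$), which relates $\ell_A(A/I)$ and $h^1(\cO_X(-Z))$. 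Finally the constant term gives $\bar e_2(I) = p_g(A) - h^1(\cO_X(-nZ))$ for the stable range $n \ge p_g(A)$, which is (4).

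The main obstacle I anticipate is bookkeeping rather than conceptual: one must be careful that the polynomial identity is asserted only in the stable range and verify that $n \ge p_g(A)-1$ is exactly the threshold where stabilization of $h^1$ kicks in, reconciling the shift between the index $n$ in $P_I(n)$ and the cycle $(n+1)Z$. A secondary point requiring justification is the identity $\bar e_0(I)=e_0(I)$ and $\overline{I^{n+1}} = H^0(\cO_X(-(n+1)Z))$; the latter is where normality of $A$ and the anti-nef/generated hypotheses enter, ensuring that powers stabilize correctly and that the cohomological description of integral closures is valid. Once these are in place, parts (2)--(4) are immediate algebraic consequences of comparing coefficients.
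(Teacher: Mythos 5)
Your proposal is correct and follows essentially the same route as the paper: identify $\ol{I^{n+1}}$ with $H^0(\cO_X(-(n+1)Z))$, apply Kato's Riemann--Roch formula to $(n+1)Z$, use Lemma~\ref{l:nZ} to stabilize $h^1(\cO_X(-(n+1)Z))$ for $n+1\ge p_g(A)$, and read off the coefficients from the binomial expansion. The paper's proof is exactly this computation, written more tersely.
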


\begin{proof}
It follows from the Riemann-Roch formula that
\begin{align*}
\ell_A(A/\ol{I^{n+1}})&=-\frac{(n+1)^2Z^2+(n+1)ZK_X}{2}+p_g(A)-h^1(\cO_X(-(n+1)Z)) \\
&=-Z^2 \genfrac{(}{)}{0pt}{0}{n+2}{2}
-\frac{-Z^2+ZK_X}{2} \genfrac{(}{)}{0pt}{0}{n+1}{1}
+p_g(A)-h^1(\cO_X(-(n+1)Z)).
\end{align*}

Since $h^1(\cO_X(-nZ))$ is stable for $n\ge p_g(A)$ by Lemma \ref{l:nZ}, 
we obtain the required assertions. 
\end{proof}

\par 
As a corollary, we obtain a simple characterization of $p_g$-ideals in terms of 
normal Hilbert coefficients.

\begin{cor} \label{pg-Hilb}
The following conditions are equivalent$:$
\begin{enumerate}
\item $I$ is a $p_g$-ideal. 
\item $\bar e_1(I)= e_0(I) - \ell_A(A/I)$. 
\item $\bar e_2(I)=0$.
\end{enumerate}
\end{cor}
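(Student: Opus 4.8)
The plan is to derive Corollary \ref{pg-Hilb} directly from Theorem \ref{t:normalHP}, which has already computed all three normal Hilbert coefficients of $I=I_Z$ in terms of the cycle $Z$. The key quantities to keep track of are $\bar e_1(I)$ and $\bar e_2(I)$, both of which Theorem \ref{t:normalHP} expresses through the defect $p_g(A)-h^1(\cO_X(-Z))$ (for $\bar e_1$) and the stable defect $p_g(A)-h^1(\cO_X(-nZ))$ for $n\ge p_g(A)$ (for $\bar e_2$). The strategy is to show that both conditions (2) and (3) are equivalent to this defect vanishing, which by definition of a $p_g$-cycle is precisely condition (1).

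First I would prove the equivalence $(1)\Leftrightarrow(3)$. By part (4) of Theorem \ref{t:normalHP}, $\bar e_2(I)=p_g(A)-h^1(\cO_X(-nZ))$ for all $n\ge p_g(A)$. By Lemma \ref{l:nZ}(1) the sequence $h^1(\cO_X(-nZ))$ is nonincreasing, so $h^1(\cO_X(-nZ))\le h^1(\cO_X(-Z))\le p_g(A)$ for $n\ge1$, using Theorem \ref{t:lepg}(1). Hence $\bar e_2(I)\ge 0$ always, and $\bar e_2(I)=0$ forces $h^1(\cO_X(-nZ))=p_g(A)$ for the stable value $n\ge p_g(A)$; combined with monotonicity this yields $h^1(\cO_X(-Z))=p_g(A)$, so $Z$ is a $p_g$-cycle. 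Conversely, if $Z$ is a $p_g$-cycle, then $h^1(\cO_X(-Z))=p_g(A)$, and by Proposition \ref{p:sg}(2) (taking $Z'=(n-1)Z$, which is also a $p_g$-cycle) each $nZ$ is a $p_g$-cycle, so $h^1(\cO_X(-nZ))=p_g(A)$ for all $n$, giving $\bar e_2(I)=0$.

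Next I would handle $(1)\Leftrightarrow(2)$. By part (3) of Theorem \ref{t:normalHP}, $\bar e_1(I)=e_0(I)-\ell_A(A/I)+\big(p_g(A)-h^1(\cO_X(-Z))\big)$. Thus the identity $\bar e_1(I)=e_0(I)-\ell_A(A/I)$ holds if and only if $p_g(A)-h^1(\cO_X(-Z))=0$, i.e.\ $h^1(\cO_X(-Z))=p_g(A)$. To conclude this is the same as $I$ being a $p_g$-ideal, I need that $\cO_X(-Z)$ has no fixed component so that Theorem \ref{t:lepg}(2) applies: equality $h^1(\cO_X(-Z))=p_g(A)$ then automatically makes $\cO_X(-Z)$ generated, hence $Z$ a $p_g$-cycle. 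Since $I$ is represented by $Z$, the ideal sheaf $I\cO_X=\cO_X(-Z)$ is invertible and $I=H^0(\cO_X(-Z))$, which by the discussion preceding Definition \ref{d:pgideal} is exactly the no-fixed-component setting, so this step is harmless.

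The main obstacle, though minor, will be the bookkeeping at the boundary between the no-fixed-component hypothesis and the definition of a $p_g$-cycle: I must be careful that "$I$ represented by $Z$" genuinely supplies both invertibility of $I\cO_X$ and the absence of fixed components, so that the generation conclusion of Theorem \ref{t:lepg}(2) can be invoked to upgrade the cohomological equality into the full $p_g$-cycle condition of Definition \ref{d:pgideal}. Once that is in place, both equivalences reduce to the single vanishing statement $h^1(\cO_X(-Z))=p_g(A)$, and the corollary follows immediately.
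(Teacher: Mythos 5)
Your proposal is correct and follows essentially the same route as the paper: both reduce all three conditions to the single equality $h^1(\cO_X(-Z))=p_g(A)$ via Theorem \ref{t:normalHP}, using the monotonicity/stability of $h^1(\cO_X(-nZ))$ from Lemma \ref{l:nZ} together with the fact that $Z$ is a $p_g$-cycle if and only if $nZ$ is (Proposition \ref{p:sg}, Corollary \ref{c:stable}). The only cosmetic difference is that you prove two biconditionals $(1)\Leftrightarrow(2)$ and $(1)\Leftrightarrow(3)$ where the paper runs the cyclic chain $(1)\Rightarrow(2)\Rightarrow(3)\Rightarrow(1)$.
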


\begin{proof}
$(1) \Longrightarrow (2)$ follows from the theorem above. 
\par \vspace{2mm} 
$(2) \Longrightarrow (3):$ By assumption, $I=I_Z$ is a $p_g$-ideal. 
Hence $I_{nZ}=I^n$ is a $p_g$-ideal by Corollary \ref{c:stable}(2), and thus $\bar e_2(I)=0$ by  the theorem above. 
\par \vspace{2mm}
$(3) \Longrightarrow (1):$ The theorem above yields that 
$h^1(\cO_X(-(n+1)Z))=p_g(A)$ for $n \gg 0$ and thus 
$I_{nZ}=I^n$ is a $p_g$-ideal. 
By Corollary \ref{c:stable}(3), we obtain that $I_Z$ is also a $p_g$-ideal. 
\end{proof}

\par 
For any cycle $Z$ on $X$, we put $Z^{\bot}=\sum_{ZE_i=0}E_i$. 

\begin{prop} \label{p:coho}
Let $Z>0$ be a cycle such that $\cO_X(-Z)$ has no fixed component.
If $C$ is the cohomological cycle on $Z^{\bot}$, i.e., the smallest cycle with 
\[
 h^1(\cO_C)=\max_{D>0, D_{red}\le Z^{\bot}}h^1(\cO_D),
\]
then $ \cO_C \cong \cO_C(-n_0Z)$ and 
$h^1(\cO_C)=h^1(\cO_X(-n_0Z))=p_g(A)-\bar e_2(I_Z)$, where 
$n_0$ is an integer given by Lemma $\ref{l:nZ}$.  
\end{prop}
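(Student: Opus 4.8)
The plan is to dispose of the numerical identity first and then concentrate on the two geometric assertions. Since $\cO_X(-Z)$ has no fixed component, a general $g\in H^0(\cO_X(-Z))$ is nonzero on every component $E_i$, so $\cO_X(-Z)|_{E_i}$ carries a nonzero section; as $E_i$ is a complete integral curve this forces $\deg_{E_i}\cO_X(-Z)=-Z\cdot E_i\ge 0$, i.e. $Z$ is anti-nef and $Z^{\bot}$ is exactly the locus on which $\cO_X(-Z)$ has degree $0$. By \lemref{l:nZ} the function $n\mapsto h^1(\cO_X(-nZ))$ is nonincreasing and constant for $n\ge n_0$ with $n_0\le p_g(A)$, so $h^1(\cO_X(-n_0Z))=h^1(\cO_X(-nZ))$ for all $n\ge p_g(A)$; combined with \thmref{t:normalHP}(4) this already yields $h^1(\cO_X(-n_0Z))=p_g(A)-\bar e_2(I_Z)$. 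It therefore remains to prove $\cO_C\cong\cO_C(-n_0Z)$ and $h^1(\cO_C)=h^1(\cO_X(-n_0Z))$.

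For the triviality I would fix a general $g\in H^0(\cO_X(-Z))$. By the no-fixed-component hypothesis $g$ does not vanish identically on any $E_i\le Z^{\bot}$, and since $\deg_{E_i}\cO_X(-Z)=0$ there, the restriction $g|_{E_i}$ is a nonzero section of a degree-$0$ line bundle on an integral curve, hence nowhere vanishing. Thus $g$ is nowhere zero on $C_{red}\le Z^{\bot}$, and by Nakayama's lemma $g$ generates the invertible sheaf $\cO_C(-Z)$ at every point of the (possibly nonreduced) cycle $C$. Multiplication by $g$ therefore gives an isomorphism $\cO_C\xrightarrow{\sim}\cO_C(-Z)$, and a fortiori $\cO_C\cong\cO_C(-n_0Z)$. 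In particular $h^1(\cO_C(-n_0Z))=h^1(\cO_C)$, so the remaining two assertions reduce to the single equality $h^1(\cO_X(-n_0Z))=h^1(\cO_C)$.

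To compare these numbers I would use the structure sequence $0\to\cO_X(-nZ-C)\to\cO_X(-nZ)\to\cO_C(-nZ)\to 0$. Since $H^2$ of any coherent sheaf vanishes (the fibres of $X\to\Spec A$ are at most one-dimensional), the restriction $H^1(\cO_X(-nZ))\to H^1(\cO_C(-nZ))\cong H^1(\cO_C)$ is surjective; as $h^1(\cO_C)$ is the maximal value $\gamma$ of $h^1(\cO_D)$ over $D_{red}\le Z^{\bot}$, this gives $h^1(\cO_X(-n_0Z))\ge h^1(\cO_C)$. For the reverse inequality the key point is that, in the stable range $n\ge n_0$, the first cohomology $H^1(\cO_X(-nZ))$ is entirely supported on $Z^{\bot}$: on each $E_i\not\le Z^{\bot}$ the degree $-nZ\cdot E_i$ is strictly positive and grows with $n$, so these components contribute nothing, and one should be able to choose a sufficiently large cycle $D$ with $D_{red}\le Z^{\bot}$ for which $H^1(\cO_X(-nZ))\to H^1(\cO_D(-nZ))$ is injective. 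Using $\cO_D(-nZ)\cong\cO_D$ (the trivialization argument above, applied to $D$) together with the standard fact $h^1(\cO_D)=\gamma$ for every $D\ge C$ supported on $Z^{\bot}$, this yields $h^1(\cO_X(-nZ))\le\gamma=h^1(\cO_C)$, and hence equality.

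The main obstacle is exactly this concentration step: making rigorous that, for $n\ge n_0$, the restriction $H^1(\cO_X(-nZ))\to H^1(\cO_D(-nZ))$ to a large cycle $D$ on $Z^{\bot}$ is injective, i.e. that the positivity of $\cO_X(-nZ)$ transverse to $Z^{\bot}$ forces all of the stable first cohomology to live on $Z^{\bot}$. I expect to handle this by analysing the maps $H^1(\cO_X(-nZ-D))\to H^1(\cO_X(-nZ))$ and showing their images vanish once $D$ is large on $Z^{\bot}$ and $n\ge n_0$, exploiting the multiplication-by-$g$ isomorphisms underlying \lemref{l:nZ} and the vanishing of $H^1$ along the positive-degree components; the final identifications with $\bar e_2(I_Z)$ and $p_g(A)$ are then immediate from \thmref{t:normalHP}.
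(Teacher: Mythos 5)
Your outline coincides with the paper's: trivialize $\cO(-Z)$ on cycles supported on $Z^{\bot}$ via a general section (degree $0$ plus a nonvanishing section forces the restriction to be trivial), compare $h^1(\cO_X(-nZ))$ with $h^1$ of a large cycle on $Z^{\bot}$ via the structure sequence, and read off the identity with $p_g(A)-\bar e_2(I_Z)$ from Lemma~\ref{l:nZ} and Theorem~\ref{t:normalHP}(4). However, the decisive step --- that for $n$ in the stable range the restriction $H^1(\cO_X(-nZ))\to H^1(\cO_D(-nZ))$ becomes injective for a suitable large $D$ with $D_{red}\le Z^{\bot}$ --- is exactly the point you flag as ``the main obstacle'' and only sketch. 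As written, this is a genuine gap: the surjectivity half is free from $H^2=0$, but nothing in your argument actually produces the injectivity, and the mechanism you gesture at (the multiplication-by-$g$ isomorphisms behind Lemma~\ref{l:nZ}) does not by itself kill the image of $H^1(\cO_X(-nZ-D))\to H^1(\cO_X(-nZ))$.

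The paper closes this gap with a specific choice and a vanishing theorem: pick $D>0$ with $\supp(D)=Z^{\bot}$ and $DE_i<0$ for every $E_i\le Z^{\bot}$. For suitable $m\gg n\gg 0$ the cycle $nD+mZ$ has strictly negative intersection with \emph{every} exceptional component (negative on $Z^{\bot}$ because of $D$, negative off $Z^{\bot}$ because $ZE_j<0$ there and $m$ dominates $n$), and then $H^1(\cO_X(-nD-mZ))=0$ by the vanishing argument of \cite[Proposition 3.10]{OWYgood}. The long exact sequence of $0\to\cO_X(-nD-mZ)\to\cO_X(-mZ)\to\cO_{nD}(-mZ)\to 0$ then gives an isomorphism $H^1(\cO_X(-mZ))\cong H^1(\cO_{nD}(-mZ))\cong H^1(\cO_{nD})$, and $h^1(\cO_{nD})=h^1(\cO_C)$ for $n$ large by the definition of the cohomological cycle. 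If you supply this choice of $D$ and the vanishing statement (or an equivalent Serre-type vanishing for cycles that are negative on all components), your proof is complete and is essentially the paper's.
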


\begin{proof}
Let $D>0$ satisfy that $\supp (D)=Z^{\bot}$ and $DE_i<0$ for all $E_i\le Z^{\bot}$.
There exist $m,n\in \N$ such that $H^1(\cO_X(-nD-mZ))=0$ (cf. the proof of \cite[Proposition 3.10]{OWYgood}).
Then $H^1(\cO_X(-mZ))=H^1(\cO_{nD}(-mZ))$.
Since $\cO_{nD}(-mZ)\cong \cO_{nD}$, 
$h^1(\cO_X(-mZ))=h^1(\cO_C)$ for sufficiently large $m$.
\end{proof}

\begin{rem}
Assume that $\cO_X(-Z)$ is generated. 
Let $E^{(1)}, \dots ,E^{(k)}$ be the connected components of $Z^{\bot}$ and assume that each $E^{(i)}$ contracts to a normal surface singularity isomorphic to  $(A_i, \m_i)$.
Then we have $p_g(A)=\bar e_2(I_Z)+\sum_{i=1}^kp_g(A_i)$
(cf. \cite[Corollary 4.5]{o.pg-splice}). 
\end{rem}

\begin{exam} \label{ex:e-power}
Let $e \ge 2$ be an integer, and let $A=k[[x,y,z]]/(x^e+y^e+z^e)$. 
Then the Poincare series of $k[x,y,z]/(x^e+y^e+z^e)$ is 
equal to
\[
\sum_{k \ge 0} \ell_A (\frm^k/\frm^{k+1})t^k = 
\dfrac{1-t^e}{(1-t)^3} = 
\dfrac{1+t+t^2+\cdots+t^{e-1}}{(1-t)^2}. 
\] 
It follows that
\[
\ell_A(A/\frm^{n+1}) = 
\left\{
\begin{array}{ll}
e \displaystyle{{n+2 \choose 2}} 
- \dfrac{e(e-1)}{2} \displaystyle{{n+1 \choose 1}} + 
\dfrac{e(e-1)(e-2)}{6} & (n \ge e), \\[5mm]
\dfrac{(n+1)(n+2)(n+3)}{6} & (n \le e-1). 
\end{array}
\right.
\]
Hence 
\[
\left\{
\begin{array}{ccl}
\overline{e}_0(\frm)&=&e_0(\frm) = e, \\[2mm]
\overline{e}_1(\frm)&=&e_1(\frm) = \dfrac{e(e-1)}{2},\\[2mm] 
\overline{e}_2(\frm)&=&e_2(\frm) = 
\dfrac{e(e-1)(e-2)}{6}=p_g(A);\quad  
\text{see \cite[(4.11)]{tki-w}}. 
\end{array}
\right.
\]
In particular, $h^1(\mathcal{O}_X(-kZ))=0$ 
for every $k \ge e$. 
\par \vspace{2mm}
On the other hand, 
\[
\dfrac{e(e-1)}{2} =\overline{e}_1(\frm)=e_0(\frm)-\ell_A(A/\frm)+p_g(A)-
h^1(\mathcal{O}_X(-Z))
\]
yields $h^1(\mathcal{O}_X(-Z))=\dfrac{(e-1)(e-2)(e-3)}{6}$.
\par \vspace{2mm}
Furthermore, since 
$ZK=2 \cdot \overline{e}_1(\frm) - (-Z^2)=e(e-2)$ and $e_0(\frm^k)=k^2e$, we have 
\begin{eqnarray*}
h^1(\mathcal{O}_X(-kZ)) &=& e_0(\frm^k)-\ell_A(A/\frm^k)
+p_g(A)-\frac{-(kZ)^2+(kZ)K}{2} \\[2mm]
&=& k^2e - {k+2 \choose 3} + {e \choose 3} 
- \dfrac{k^2e+ke(e-2)}{2} \\[2mm]
&=& \dfrac{(e-k)(e-k-1)(e-k-2)}{6} = {e-k \choose 3} 
\end{eqnarray*}
for each $k =1,2,\ldots,e-1$. 
In particular, we get
\[
h^1(\mathcal{O}_X(-(e-3)Z))=1 \quad \text{and} \quad  
h^1(\mathcal{O}_X(-(e-2)Z))=0,
\] 
and thus $n_0=e-2$ in Lemma \ref{l:nZ}. 
\end{exam}
\medskip
\section{The Rees algebra}

Let $(A,\m)$ be a Cohen-Macaulay local ring of dimension $d$, and let $I$ be an ideal of $A$. 
Now consider three $A$-algebras, which are called blow-up algebras. 
\begin{eqnarray*}
\cR(I) &:= & A[It]=\bigoplus_{n\ge 0}I^nt^n \subset A[t]. \\
\mathcal{R}'(I) &:=& A[It,t^{-1}] = 
\bigoplus_{n \in \bbZ}I^nt^n \subset A[t,t^{-1}]. \\
G(I)&:=& \cR(I)/I\cR(I) \cong 
\mathcal{R}'(I)/t^{-1}\mathcal{R}'(I). 
\end{eqnarray*}
\par \vspace{2mm}
The algebra $\cR(I)$ (resp. $\mathcal{R}'(I)$, $G(I)$) is called the \textit{Rees algebra} 
(resp. \textit{the extended Rees algebra}, 
\textit{the associated graded ring}) of $I$. 
 
\par 
The main purpose of this section is to characterize 
$p_g$-ideals in terms of blow-up algebras.

\begin{thm}\label{t:pgR}
Let $(A,\m)$ be a two-dimensional excellent normal local domain over an  algebraically closed field, and let 
$I\subset A$ be an $\m$-primary ideal. 
Then the following conditions are equivalent$:$
\begin{enumerate}
 \item $I$ is a $p_g$-ideal in the sense of 
Definition $\ref{d:pgideal}$. 
 \item $I^2=QI$ for some minimal reduction $Q$ of $I$, 
and $\overline{I^n}=I^n$ holds true for every $n \ge 1$. 
 \item $\mathcal{R}(I)$ is a Cohen-Macaulay normal domain. 
 \item $\mathcal{R}'(I)$ is a Cohen-Macaulay normal domain with 
$a(G(I))<0$. 
\end{enumerate}
\end{thm}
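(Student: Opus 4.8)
The plan is to establish the three equivalences $(1)\Leftrightarrow(2)$, $(2)\Leftrightarrow(3)$ and $(3)\Leftrightarrow(4)$ separately, feeding the geometric input already collected in Sections 2 and 3 into the standard machinery of blow-up algebras. The external tools I would invoke are the Valabrega--Valla criterion (so that a reduction number at most one forces $G(I)$ to be Cohen--Macaulay in our two-dimensional setting), the Goto--Shimoda theorem (characterizing Cohen--Macaulayness of $\cR(I)$ by that of $G(I)$ together with a bound on the reduction number), the relation $a(G(I))=r_Q(I)-d$ valid when $G(I)$ is Cohen--Macaulay, and the elementary identification of the integral closures of $\cR(I)$ and $\mathcal{R}'(I)$ as $\bigoplus_n\overline{I^n}t^n$.

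For $(1)\Rightarrow(2)$ there is nothing to do: it is exactly Corollary \ref{c:pgprop}. For $(2)\Rightarrow(1)$, note first that the case $n=1$ of $\overline{I^n}=I^n$ makes $I$ an integrally closed $\m$-primary ideal, hence represented by some cycle $Z>0$ on a suitable resolution, so that Theorem \ref{t:normalHP} and Corollary \ref{pg-Hilb} apply. Since $I^2=QI$, the reduction number of $I$ with respect to $Q$ is at most one; in a two-dimensional Cohen--Macaulay ring this yields $Q\cap I^n=QI^{n-1}$ for all $n$, so $G(I)$ is Cohen--Macaulay by Valabrega--Valla, and its $h$-polynomial then has degree at most one, forcing the ordinary coefficient $e_2(I)$ to vanish. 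Because every power of $I$ is integrally closed, the normal and ordinary Hilbert functions coincide, so $\bar e_2(I)=e_2(I)=0$, and Corollary \ref{pg-Hilb} concludes that $I$ is a $p_g$-ideal.

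The equivalence $(2)\Leftrightarrow(3)$ is now a translation. Since $A$ is normal, the integral closure of $\cR(I)$ in $A[t]$ is $\bigoplus_{n\ge 0}\overline{I^n}t^n$, so $\cR(I)$ is a normal domain precisely when $\overline{I^n}=I^n$ for all $n\ge 1$. Granting normality, I would apply Goto--Shimoda: $\cR(I)$ is Cohen--Macaulay if and only if $G(I)$ is Cohen--Macaulay and $r_Q(I)\le d-1=1$. As above, $I^2=QI$ gives both (Valabrega--Valla for the first, and $r_Q(I)\le 1$ is literally the condition $I^2=QI$), while conversely $\cR(I)$ Cohen--Macaulay forces $r_Q(I)\le 1$, i.e. $I^2=QI$. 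Finally, $(3)\Leftrightarrow(4)$ is the standard comparison of blow-up algebras: $\mathcal{R}'(I)$ and $\cR(I)$ share the same normalization, so their normality is the same condition $\overline{I^n}=I^n$ for all $n$; moreover $\mathcal{R}'(I)$ is Cohen--Macaulay if and only if $G(I)=\mathcal{R}'(I)/(t^{-1})$ is Cohen--Macaulay, since $t^{-1}$ is a nonzerodivisor, and given this, $\cR(I)$ is Cohen--Macaulay if and only if in addition $a(G(I))<0$, because $a(G(I))=r_Q(I)-d$ makes $a(G(I))<0$ equivalent to $r_Q(I)\le d-1$.

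The main obstacle is not any single deep step but the careful bookkeeping at the interface with blow-up algebra theory: one must verify that reduction number at most one really does produce a Cohen--Macaulay $G(I)$ in dimension two, which here reduces to checking the intersection conditions $Q\cap I^n=QI^{n-1}$ and follows from the identities $I^n=Q^{n-1}I$ for $n\ge 1$ obtained by iterating $I^2=QI$; and one must tie the $a$-invariant to the reduction number so that condition $(4)$ separates cleanly from condition $(3)$. The normality statements, by contrast, are formal once the normalizations are identified, and the genuinely geometric content has already been packaged into Corollaries \ref{c:pgprop} and \ref{pg-Hilb}.
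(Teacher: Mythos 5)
Your proof is correct, and for three of the four implications it is essentially the paper's argument made explicit: the paper handles $(2)\Rightarrow(3)$ and $(3)\Leftrightarrow(4)\Rightarrow(2)$ by citing Valabrega--Valla, Goto--Shimoda and Herzog--Simis--Vasconcelos, which is exactly the bookkeeping (reduction number one, $a(G(I))=r_Q(I)-d$, normalization of the blow-up algebras) that you spell out. The one place you genuinely diverge is $(2)\Rightarrow(1)$. The paper argues directly on the resolution: writing $I=I_Z$, the hypothesis $QI^n=I^{n+1}=\overline{I^{n+1}}$ forces $\varepsilon(Z,nZ)=0$, i.e.\ $p_g(A)-h^1(\cO_X(-Z))-h^1(\cO_X(-nZ))+h^1(\cO_X(-(n+1)Z))=0$, and letting $n\to\infty$, where $h^1(\cO_X(-nZ))$ stabilizes by Lemma~\ref{l:nZ}, gives $h^1(\cO_X(-Z))=p_g(A)$ outright. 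You instead route through the Hilbert coefficients: reduction number one plus Valabrega--Valla gives $G(I)$ Cohen--Macaulay with $h$-vector of length two, hence $e_2(I)=0$; integral closedness of all powers gives $\bar e_2(I)=e_2(I)=0$; and Corollary~\ref{pg-Hilb} finishes. This is legitimate and non-circular (Corollary~\ref{pg-Hilb} rests only on Section~3), and it has the merit of exhibiting condition $\bar e_2(I)=0$ of the Main Theorem as the pivot; the paper's version is slightly more economical in that it never needs $G(I)$ to be Cohen--Macaulay for this implication, only the single equality $QI^n=\overline{I^{n+1}}$, though both arguments ultimately rest on the same stabilization of $h^1(\cO_X(-nZ))$. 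One small imprecision on your side: $\cR(I)$ and $\mathcal{R}'(I)$ do not literally share a normalization; what is true, and what you actually use, is that each is normal if and only if $\overline{I^n}=I^n$ for all $n\ge 1$.
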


\begin{proof}
$(1) \Longrightarrow (2):$ 
It follows from Corollary \ref{c:pgprop}.
\par \vspace{2mm} 
$(2) \Longrightarrow (3):$ 
Since $I^2=QI$ for some minimal reduction $Q$ of $I$, 
$\cR(I)$ is Cohen-Macaulay by Valabrega--Valla \cite{FormRing} and  Goto--Shimoda \cite{Goto-Shimoda}.
Moreover, since $A$ is normal and $\overline{I^n}=I^n$ for every $n \ge 1$, 
$\cR(I)$ is a normal domain.

\par \vspace{2mm} 
$(4) \Longleftrightarrow (3) \Longrightarrow (2)$ follows from  
Goto--Shimoda \cite{Goto-Shimoda} and Herzog et.al \cite[Proposition 2.1.2]{HSV}.  
\par \vspace{2mm} 
$(2) \Longrightarrow (1):$ 
Assume that $I^n$ is integrally closed for $n\ge 1$ and that $I^2=QI$ for  a minimal reduction $Q$ of $I$.
Suppose that $I$ is represented by a cycle $Z$ on $X$.
Consider the following exact sequence given by general elements of $I=I_Z$ and $I_{nZ}$ (see \cite[(2.3)]{OWYgood}).
\[
0 \to \cO_X \to  \cO_X(-Z) \oplus \cO_X(-nZ)\to \cO_X(-(n+1)Z)\to 0.
\]
Since $QI^n=I^{n+1}=\ol{{I^{n+1}}}$, we obtain that $\varepsilon(Z,nZ)=0$ for $n\ge 1$. 
Therefore, $p_g(A)=h^1(\cO_X(-Z))$ because 
$h^1(\cO_X(-nZ))$ is stable for $n\gg 0$.
\end{proof}

\par
The following two examples are known.  

\begin{exam}[\textrm{cf. Lipman \cite{Li}} ] \label{rational-pg}
Let $A$ be a two-dimensional rational singularity. 
Then any integrally closed $\m$-primary ideal $I$ is a 
$p_g$-ideal and $\mathcal{R}(I)$ is a Cohen-Macaulay normal domain. 
\end{exam}

\begin{exam}
Let $A$ be a complete Gorenstein local ring with 
$p_g(A) > 0$. 
If $\m$ is a $p_g$-ideal of $A$, then $\m$ is stable, that is, 
$\m^2=Q\m$ for some minimal reduction $Q$ of $\m$. 
Since $A$ is Gorenstein, we obtain that $A$ is a hypersurface of degree $2$. 
So we may assume that $A=K[[x,y,z]]/(f)$, where 
$f=x^2+g(y,z)$. As $A$ is not rational, $g(y,z) \in (y,z)^3$. 
Moreover, since $R(\m)$ is normal, we have 
$g(y,z) \notin (y,z)^4$. 
\par 
Conversely, if $A = K[[x,y,z]]/(x^2+g(y,z))$, where 
$g(y,z) \in (y,z)^3 \setminus (y,z)^4$, then 
for every $n$,  $\m^n=  (y,z)^n + x (y,z)^{n-1}$ and is integrally closed.
Then,  since $\m$ is stable and $\m^n$ is integrally closed for every $n\ge 1$, 
$\m$ is a $p_g$-ideal.
\end{exam}

\par 
The next example gives a hypersurface local ring $A$ 
whose maximal ideal is a $p_g$-ideal and 
$p_g(A)=p$ for a given integer $p \ge 1$. 
\begin{ex} \label{ex-pg}
Let $p \ge 1$ be an integer, and let $k$ be an algebraically closed field. 
Let $B=k[x,y,z]/(x^2+y^3+z^{6p+1})$. 
If we put $\deg x=3(6p+1)$, $\deg y=2(6p+1)$ and 
$\deg z=6$, then $A$ can be regarded as a quasi-homogeneous $k$-algebra with $a(A)=6p-5$. 
In particular, 
\[
p_g(B)=\sum_{i=0}^{6p-5} \dim_k B_i = p; \quad 
\text{(cf. \cite{p.qh, KeiWat-D})}.
\]
Moreover, if we put $X=xt$, $Y=yt$, $Z=zt$ and $U=t^{-1}$,
then the extended Rees algebra of $\m=(x,y,z)$ is 
\[
\cR'(\m) \cong k[X,Y,Z,U]/(F),
\]
where $F=X^2+Y^3U+Z^{6p+1}U^{6p-1}$.
Since the Jacobian ideal is 
\[
\left(\frac{\partial F}{\partial X},
\frac{\partial F}{\partial Y},
\frac{\partial F}{\partial Z},
\frac{\partial F}{\partial U},F
\right)=
(X,Y^2U,Z^{6p}U^{6p-1},Y^3+(6p-1)Z^{6p+1}U^{6p-2}), 
\]
one can check $(R_1)$-condition of $\cR'(\m)$. 
Thus $\cR'(\m)$ is a normal domain because it is Cohen-Macaulay. 
\par
Now let us put $A=B_{(x,y,z)}$ and $\m=(x,y,z)A$. 
Then we can conclude that $A$ is a two-dimensional normal 
hypersurface with $p_g(A)=p$ and that 
$\m$ is a $p_g$-ideal by applying the theorem above.

\par \vspace{2mm}
Similarly, if we consider $I_k=(x,y,z^k)A$ and $Q_k=(y,z^k)$ for 
$k=2,3,\ldots,3p$, then $I_k^2=Q_kI_k$ and $\cR'(I_k)$
is a normal domain. 
Hence $I_k$ is a $p_g$-ideal. 
\end{ex}

\par 
The next example gives a hypersurface local ring $A$ 
whose maximal ideal is not  a $p_g$-ideal and 
$p_g(A)=p$ for a given integer $p \ge 1$.

\begin{ex} \label{ex-nonpg}
Let $p\ge 1$ be an integer. 
Let $A=k[x,y,z]_{(x,y,z)}/(x^2+y^4+z^{4p+1})$. 
Then $A$ is a two-dimensional normal hypersurface with 
$p_g(A)=p$. 
Then $\m=(x,y,z)$ is \textit{not} a $p_g$-ideal and 
$I_k =(x,y,z^k)$ is a $p_g$-ideal for every $k=2,3, \ldots,2p$ because $\cR'(I_k)$ is normal but $\cR'(\m)$ is not. 
\par 
Furthermore, $\m^k$ is not a $p_g$-ideal for every $k \ge 1$ by Corollary \ref{c:stable}. 
\end{ex}

It is not so difficult to extend our result to the case of 
bigraded Rees algebras. 
Let $I,J \subset A$ be ideals. 
Then 
\[
\cR(I,J):=A[It_1,Jt_2] =\bigoplus_{n=1}^{\infty}\bigoplus_{m=1}^{\infty} I^{m}J^nt_1^{m}t_2^n \subset A[t_1,t_2]
\]
is called the \textit{multi-Rees algebra} of $I$ and $J$.

\begin{cor} \label{c:multi}
Let $(A,\m)$ be a two-dimensional excellent normal local domain 
over an algebraically closed field, 
and let $I$, $J$ be $\m$-primary ideals. 
Then the following conditions are equivalent$:$
\begin{enumerate}
 \item $I$ and $J$ are $p_g$-ideals. 
 \item $\cR(I,J)$ is a Cohen-Macaulay normal domain. 
 \item $I$, $J$ are integrally closed and 
$\mathcal{R}(IJ)$ is a Cohen-Macaulay normal domain. 
\end{enumerate} 
\end{cor}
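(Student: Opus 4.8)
The plan is to bootstrap the bigraded statement from the single-ideal Theorem \ref{t:pgR} together with the multiplicative behaviour of $p_g$-ideals in Corollary \ref{c:stable}, confining the genuinely bigraded work to the Cohen--Macaulayness of the multi-Rees algebra. I would prove the equivalences in the pattern $(1)\Leftrightarrow(3)$, $(1)\Rightarrow(2)$, $(2)\Rightarrow(3)$. The equivalence $(1)\Leftrightarrow(3)$ is essentially formal: if $I,J$ are $p_g$-ideals they are integrally closed and, by Corollary \ref{c:stable}(2), $IJ$ is again a $p_g$-ideal, so Theorem \ref{t:pgR} makes $\cR(IJ)$ a Cohen--Macaulay normal domain; conversely, if $I,J$ are integrally closed and $\cR(IJ)$ is a Cohen--Macaulay normal domain, then $IJ$ is a $p_g$-ideal by Theorem \ref{t:pgR}, whence $I$ and $J$ are $p_g$-ideals by Corollary \ref{c:stable}(3). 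No bigraded input enters here.

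For $(1)\Rightarrow(2)$ I would treat normality and Cohen--Macaulayness separately. Normality is soft: iterating Corollary \ref{c:stable}(2) shows every component $I^mJ^n$ is a $p_g$-ideal, hence integrally closed, and since the integral closure of $\cR(I,J)$ in $A[t_1,t_2]$ is $\bigoplus_{m,n}\overline{I^mJ^n}\,t_1^mt_2^n$ and $A$ is normal, $\cR(I,J)$ is a normal domain. Cohen--Macaulayness is the substantive point: I would aim for a bigraded joint-reduction equality $I^mJ^n=a\,I^{m-1}J^n+b\,I^mJ^{n-1}$ for general $a\in I$, $b\in J$, so that a multigraded Valabrega--Valla criterion applies. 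The vanishing $\varepsilon(Z,Z')=0$ attached to $p_g$-cycles (Proposition \ref{p:sg}(1)) should be exactly the input guaranteeing that the relevant joint reduction has reduction number zero.

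For $(2)\Rightarrow(3)$, normality of $\cR(I,J)$ gives $I^mJ^n=\overline{I^mJ^n}$ for all $m,n$; taking $n=0$ and $m=0$ shows $I$ and $J$ are integrally closed, and the diagonal identities $I^nJ^n=\overline{(IJ)^n}$ show $\cR(IJ)$ is normal. The remaining point is the Cohen--Macaulayness of the diagonal subalgebra $\cR(IJ)=\bigoplus_n I^nJ^n(t_1t_2)^n\subset \cR(I,J)$, which I would deduce from that of $\cR(I,J)$ via the theory of diagonal subalgebras of bigraded algebras; then $(3)\Rightarrow(1)$ is already established, closing the cycle.

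I expect the main obstacle to be precisely these two bigraded Cohen--Macaulayness steps. In $(1)\Rightarrow(2)$ one must verify that the $p_g$-property really delivers the bigraded analogue of $I^2=QI$, namely the joint-reduction-number-zero equality above, rather than merely the single relation $IJ=aJ+bI$ of Corollary \ref{c:stable}(1). In $(2)\Rightarrow(3)$ one must control the passage to the diagonal, since diagonals of Cohen--Macaulay multigraded algebras need not be Cohen--Macaulay without extra vanishing hypotheses ($a$-invariant or local cohomology conditions), which would have to be checked in the present two-dimensional, $\m$-primary setting.
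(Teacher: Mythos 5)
Your architecture matches the paper's: the equivalence $(1)\Leftrightarrow(3)$ is handled exactly as in the text (Theorem \ref{t:pgR} plus Corollary \ref{c:stable}), and normality of $\cR(I,J)$ is obtained, as in the paper, from the integral closedness of all the components $I^mJ^n$. The only real divergence is in how the two Cohen--Macaulayness steps are discharged. The paper simply quotes Hyry \cite[Corollary 3.5]{Hy} in both directions: for $(1)\Rightarrow(2)$ it feeds in that $\cR(I)$, $\cR(J)$ are Cohen--Macaulay together with the joint reduction $IJ=aJ+bI$ from Corollary \ref{c:stable}(1), and for $(2)\Rightarrow(1)$ it extracts Cohen--Macaulayness of $\cR(I)$ and $\cR(J)$ back out of $\cR(I,J)$ (using purity for normality) rather than passing through the diagonal $\cR(IJ)$ as you do. Your plan of proving joint reduction number zero and invoking a multigraded Valabrega--Valla criterion, and of controlling the diagonal subalgebra, is exactly the content of the references the paper cites as alternatives (\cite{Ver2}, \cite{HHRT}, and Hyry's diagonal-subring theorem), so the routes are materially the same; yours just re-derives what the paper outsources. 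One of the two ``main obstacles'' you flag is in fact no obstacle at all: the bigraded identity $I^mJ^n=aI^{m-1}J^n+bI^mJ^{n-1}$ for $m,n\ge 1$ follows from the single relation $IJ=aJ+bI$ of Corollary \ref{c:stable}(1) simply by multiplying through by $I^{m-1}J^{n-1}$; no new use of $\varepsilon(Z,Z')=0$ is needed, though you do still need $I^2=QI$ and $J^2=Q'J$ (Corollary \ref{c:pgprop}) to cover the pure powers in the bigraded criterion. The other concern, Cohen--Macaulayness of the diagonal, is legitimate in general but is precisely what \cite{Hy} supplies in this two-dimensional $\m$-primary setting.
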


\begin{proof}
$(1) \Longrightarrow (2):$ 
Since $I$ and $J$ are $p_g$-ideals, $\cR(I)$ and $\cR(J)$ 
are Cohen-Macaulay and $IJ=aJ+bI$ for some joint reduction $(a,b)$ of $(I,J)$. 
Hence $\cR(I,J)$ is Cohen-Macaulay by \cite[Corollary 3.5]{Hy} 
(see also e.g. \cite{HHRT}, \cite{Ver1}, \cite{Ver2}).
Since $S=\cR(I)$ is a normal domain and $JI^k$ is integrally closed 
for every $k \ge 1$,  $\cR(I,J)$ is normal. 
\par \vspace{2mm}
$(2) \Longrightarrow (1):$ 
Since $\cR(I,J)$ is Cohen-Macaulay, $\cR(I)$ and $\cR(J)$ are Cohen-Macaulay by \cite[Corollary 3.5]{Hy}. 
Since $\cR(I)$ and $\cR(J)$ are pure subrings of $\cR(I,J)$, 
they are normal domains. 
Hence $I$ and $J$ are $p_g$-ideals by Theorem \ref{t:pgR}.
\par \vspace{2mm}
$(1) \Longleftrightarrow (3):$
It follows from Theorem \ref{t:pgR}and Corollary \ref{c:stable}. 
\end{proof}

\begin{rem}
By a similar argument as in the proof of 
$(1) \Longrightarrow (2)$, we can obtain that 
the multi-Rees algebra $\mathcal{R}(I_1,\ldots,I_r)$ 
is a Cohen-Macaulay normal domain for every 
$p_g$-ideals $I_1,\ldots,I_r$. 
\end{rem}

\begin{rem}
Assume that $A$ is a rational singularity. 
Let $I$ and $J$ be $\m$-primary integrally closed ideals of $A$. 
Then $I$ and $J$ are $p_g$-ideals and thus 
$\cR(I)$, $\cR(J)$ and $\cR(I,J)$ are Cohen-Macaulay normal domains. 
In fact, S. Goto, N. Taniguchi and the third author \cite{GTY} prove that 
$\cR(I)$ and $\cR(J)$ are almost Gorenstein. Moreover, Verma \cite{Ver2} 
proved that they admit minimal multiplicities. 
\end{rem}


\providecommand{\bysame}{\leavevmode\hbox to3em{\hrulefill}\thinspace}
\providecommand{\MR}{\relax\ifhmode\unskip\space\fi MR }
\providecommand{\MRhref}[2]{%
  \href{http://www.ams.org/mathscinet-getitem?mr=#1}{#2}
}
\providecommand{\href}[2]{#2}


\begin{thebibliography}{10}


\bibitem{GdId}
S.~Goto, S.~Iai, and K.-i. Watanabe, \emph{Good ideals in
  {G}orenstein local rings}, Trans. Amer. Math. Soc. \textbf{353} (2001),
  no.~6, 2309--2346 (electronic).

\bibitem{Goto-Shimoda}
S. Goto and Y. Shimoda, \emph{On the {R}ees algebras of
  {C}ohen-{M}acaulay local rings}, Commutative algebra ({F}airfax, {V}a.,
  1979), Lecture Notes in Pure and Appl. Math., vol.~68, Dekker, New York,
  1982, pp.~201--231.

\bibitem{GTY} 
S.~Goto, N.~Taniguchi and K.~Yoshida, 
\emph{Almost {G}orenstein property for {R}ees alegbras of $p_g$-ideals},
in preparation. 

\bibitem{harts}
R.~Hartshorne, \emph{Algebraic geometry}, Springer-Verlag, New
  York-Heidelberg, 1977, Graduate Texts in Mathematics, No. 52.

\bibitem{HHRT}
M.~Herrmann, E.~Hyry, J.~Ribbe,
and Z.~Tang, 
\emph{Reduction Numbers and Multiplicities
of Multigraded Structures}, \textbf{197} (1997), 
311--341.


\bibitem{HSV} 
J.~Herzog, A.~Simis and W.~V.~Vasconcelos, 
\emph{Arithmetic of normal Rees algebras}, 
J. Algebra \textbf{143} (1991), 269--294. 

\bibitem{Hu} 
C.~Huneke, 
\emph{Hilbert functions and symbolic powers}, 
Michigan Math.~J. \textbf{34} (1987), 293--318. 


\bibitem{Hy}
E.~Hyry, 
\emph{The diagonal subring and the Cohen-Macaulay 
property of a multigraded ring}, 
Trans. Amer. Math. Soc. \textbf{351} (1999), 2213--2232. 

\bibitem{Hyry-Smith}
E. Hyry and K.~E. Smith, \emph{On a non-vanishing conjecture of {K}awamata
  and the core of an ideal}, Amer. J. Math. \textbf{125} (2003), no.~6,  1349--1410.

\bibitem{Itoh}
S.~Itoh, 
\emph{Coefficients of Normal Hilbert Polynomials}, 
J. Algebra. \textbf{150} (1992),
101--117.

\bibitem{kato}
M.~Kato, \emph{Riemann-Roch theorem for strongly pseudoconvex manifolds of dimension $2$},
Math. Ann. \textbf{222}, (1976), 243--250.



\bibitem{Li} J.~Lipman,
\emph{Rational singularities with applications to algebraic surfaces
and unique factorization},
Inst. Hautes \'Etudes Sci. Publ. Math.
\textbf{36} (1969), 195--279.



\bibitem{o.numGell}
T.~Okuma, \emph{Numerical {G}orenstein elliptic singularities}, Math. Z.
  \textbf{249} (2005), no.~1, 31--62.

\bibitem{o.pg-splice}
\bysame, \emph{The geometric genus of splice-quotient 
singularities}, Trans. Amer. Math. Soc. \textbf{360} 
(2008), no.~12, 6643--6659. 

\bibitem{OWYgood}
T.~{Okuma}, K.-i. {Watanabe}, and K.{Yoshida}, 
\emph{{Good ideals and
  $p_g$-ideals in two-dimensional normal singularities}}, available from arXiv:1407.1590. 

\bibitem{p.qh}
H.~Pinkham, 
\emph{Normal surface singularities with {$C\sp*$} action},
 Math. Ann. \textbf{227} (1977), no.~2, 183--193.



\bibitem{SH} 
I.~Swanson and C.~Huneke, 
\emph{Integral Closure of Ideals, Rings, and Modules}, 
London Mathematical Society, Lecture Note Series \textbf{336}, 
Cambridge University Press, Cambridge/New York/Melbourne/Madrid/
Cape Town/Singapore/S\~an Paulo, 2006.  


\bibitem{tki-w}
M.~Tomari and K.-i.~Watanabe, 
\emph{Filtered rings, filtered blowing-ups and 
normal two-dimensional singularities with ``star-shaped''  resolution}, 
Publ. Res. Inst. Math. Sci. 
\textbf{25} (1989), no.~5, 681--740.



\bibitem{FormRing}
P.~Valabrega and G.~Valla, \emph{Form rings and regular sequences},
  Nagoya Math. J. \textbf{72} (1978), 93--101. 


\bibitem{Ver1}
J.~K.~Verma, \emph{Joint reductions of complete ideals}, 
Nagoya Math. J. \textbf{118} (1990), 155--163.

\bibitem{Ver2}
\bysame, \emph{Joint reductions and Rees algebras}, 
Math. Proc. Camb. Phil. Soc. \textbf{109} (1991), 335--342. 



\bibitem{wahl.vanish}
J.~Wahl, \emph{Vanishing theorem for resolutions of surface singularities},
  \textbf{31} (1975), 17--41.

\bibitem{KeiWat-D}
K.-i.~Watanabe, 
\emph{Some remarks concerning {D}emazure's 
construction of normal graded rings}, 
Nagoya Math. J. \textbf{83} (1981), 203--211.





\end{thebibliography}
\end{document}